\newtheorem{theorem}{Theorem}[section]
\newtheorem{lemma}[theorem]{Lemma}
\newtheorem{conjecture}[theorem]{Conjecture}
\theoremstyle{remark}
\newtheorem{claim}{Claim}[theorem]
\theoremstyle{definition}
\title{A spectral Erd\H{o}s-S\'os theorem}
\date{\today}
\author{Sebastian Cioab\u{a}\thanks{Department of Mathematical Sciences, University of Delaware, \texttt{cioaba@udel.edu}. Research partially supported by National Science Foundation grant CIF-1815922.} \and Dheer Noal Desai\thanks{Department of Mathematical Sciences, University of Delaware, \texttt{dheernsd@udel.edu}} \and Michael Tait\thanks{Department of Mathematics \& Statistics, Villanova University, \texttt{michael.tait@villanova.edu}. Research partially supported by National Science Foundation grant DMS-2011553.}}
\begin{document}

\maketitle
\begin{abstract}
   The famous Erd\H{o}s-S\'os conjecture states that every graph of average degree more than $t-1$ must contain every tree on $t+1$ vertices. In this paper, we study a spectral version of this conjecture. For $n>k$, let $S_{n,k}$ be the join of a clique on $k$ vertices with an independent set of $n-k$ vertices and denote by $S_{n,k}^+$ the graph obtained from $S_{n,k}$ by adding one edge. We show that for fixed $k\geq 2$ and sufficiently large $n$, if a graph on $n$ vertices has adjacency spectral radius at least as large as $S_{n,k}$ and is not isomorphic to $S_{n,k}$, then it contains all trees on $2k+2$ vertices. Similarly, if a sufficiently large graph has spectral radius at least as large as $S_{n,k}^+$, then it either contains all trees on $2k+3$ vertices or is isomorphic to $S_{n,k}^+$. This answers a two-part conjecture of Nikiforov affirmatively. 
\end{abstract}
\section{Introduction}
For a fixed graph $F$, the {\em Tur\'an number} for $F$ is denoted by $\mathrm{ex}(n, F)$ and is the maximum number of edges an $n$ vertex $F$-free graph may have. The celebrated Erd\H{o}s-Stone-Simonovits theorem \cite{ES66, ES46} determines an asymptotic formula for $\mathrm{ex}(n, F)$ for any $F$ with chromatic number at least $3$. On the other hand, determining even the order of magnitude of the Tur\'an number for most bipartite graphs is a notoriously difficult problem. 

The particular class of bipartite Tur\'an problems of interest to us in this paper is the class of trees. Given a tree $T$ with $t+1$ vertices, considering disjoint copies of $K_t$ shows that $\mathrm{ex}(n, T) \geq \left \lfloor \frac{n}{t} \right \rfloor \binom{t}{2}$. On the other hand, a standard graph theory exercise shows that $\mathrm{ex}(n, T) \leq n(t-1)$. Note the multiplicative factor of $2$ between these bounds. The Erd\H{o}s-S\'os conjecture predicts that the lower bound is correct for all trees.

\begin{conjecture}[Erd\H{o}s and S\'os 1963 (see \cite{erdos1965})]
Any graph of average degree larger than $t-1$ contains all trees on $t+1$ vertices.
\end{conjecture}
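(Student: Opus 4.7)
The plan is to proceed by strong induction on the number of vertices $n$, with the tree $T$ on $t+1$ vertices fixed. Suppose for contradiction that $G$ is a minimum counterexample: an $n$-vertex graph with average degree strictly greater than $t-1$ that is nevertheless $T$-free. First I would clean up $G$: if any vertex $v$ has degree at most $\lfloor (t-1)/2 \rfloor$, then $G-v$ still has average degree exceeding $t-1$, so by induction $G-v$ contains $T$, a contradiction. Thus we may assume $\delta(G) \geq \lceil t/2 \rceil$, and a similar pruning of low-degree components lets us assume $G$ is connected.

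Next I would attempt a greedy embedding of $T$ into $G$. Root $T$ at a centroid so that every rooted subtree has size at most $(t+1)/2$, and process the vertices of $T$ in BFS order, extending each tree edge to a free neighbor of the image of the parent. At each step the parent has degree at least $\lceil t/2 \rceil$ in $G$, while at most $(t+1)/2 - 1$ previously embedded vertices lie in its neighborhood, so naively a free neighbor ought to exist. The only way the greedy step can fail is if the previously embedded vertices concentrate heavily in $N(\text{parent})$, i.e.\ if $G$ contains a small dense pocket that traps the embedding.

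The main obstacle is exactly this potential collision. To address it I would complement the greedy step with an Erd\H{o}s--Gallai-style auxiliary lemma: if some $v \in V(G)$ has degree significantly larger than $t-1$, then placing a high-degree vertex of $T$ at $v$ and carefully ordering the branches lets us exploit $N(v)$ to route the dense part of $T$. If no such $v$ exists then $G$ is nearly $(t-1)$-regular, and a stability argument should force $G$ to resemble a disjoint union of cliques $K_t$, contradicting the strict inequality on the average degree.

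I expect the real difficulty to lie in combining the local greedy embedding with the global high-degree argument uniformly over all trees $T$: trees with a few vertices of very high degree (stars, spiders) and trees of uniformly low degree (paths, balanced binary trees) naturally call for different tools, and gluing them into a single proof is precisely why the Erd\H{o}s--S\'os conjecture remains open in general. For sufficiently large $t$ this was accomplished by Ajtai, Koml\'os, Simonovits, and Szemer\'edi using the regularity and blow-up lemmas, and any serious attempt in the remaining regime would, in my view, have to mimic their absorption-plus-regularity template while developing a sharper structural analysis to handle the case in which regularity is too coarse.
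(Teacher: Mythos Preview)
The statement you are addressing is the Erd\H{o}s--S\'os conjecture itself, which the paper records as an \emph{open} conjecture and does not attempt to prove; the paper's contribution is the spectral analogue (Conjecture~\ref{conjecture Nikiforov paths}). So there is no ``paper's own proof'' to compare against, and your write-up is, appropriately, a survey of heuristics rather than a proof---as you yourself acknowledge in the last two paragraphs.

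That said, one step in your outline is actually incorrect as stated, not merely incomplete. In the greedy embedding, rooting $T$ at a centroid bounds the size of each rooted subtree by $(t+1)/2$, but it does \emph{not} bound the number of previously embedded vertices lying in $N_G(\phi(p))$ for the current parent $p$. By the time you embed the last leaf there are $t$ earlier images, and nothing prevents all of them from being neighbors of $\phi(p)$ in $G$; the centroid property constrains the tree, not the host graph. Hence the inequality ``at most $(t+1)/2-1$ previously embedded vertices lie in its neighborhood'' is unjustified, and $\delta(G)\ge\lceil t/2\rceil$ does not by itself guarantee a free neighbor. The naive greedy argument needs $\delta(G)\ge t$, and bridging the gap between $t/2$ and $t$ is precisely the content of the conjecture. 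Your subsequent dichotomy (a vertex of very high degree versus a nearly $(t-1)$-regular graph) is the natural next move, but neither branch is carried out, and in particular near-regularity plus average degree barely above $t-1$ does not obviously force a disjoint-$K_t$ structure. As you correctly note, the only announced resolution for large $t$ is the Ajtai--Koml\'os--Simonovits--Szemer\'edi regularity argument, and your outline does not go beyond pointing to it.
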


There are many partial results towards this conjecture. For example, the conjecture is true if one additionally forbids $C_4$ (see \cite{SW} which improved the result from \cite{brandt} that the conjecture holds in host graphs of girth $5$), in the case that $t$ is large compared to $n$ \cite{GZ, STY, Tiner, wozniak}, when the tree has diameter $4$ \cite{M}. An asymptotic solution for sufficiently many vertices was announced by Ajtai-Koml\'os-Simonovits-Szemer\'edi (c.f. \cite[pp. 224-227]{degeneratesurvey}). Many other partial results towards the conjecture have appeared and we will not try to list them all.

In this paper we consider a spectral version of this conjecture. Given a graph $G$, let $\lambda=\lambda(G)$ denote the spectral radius of its adjacency matrix. Maximizing $\lambda$ over a fixed family of graphs is known as a {\em Brualdi-Solheid problem} \cite{BS}. Many results on Brualdi-Solheid type problems have been obtained over the years, for example \cite{BZ, BLL, EZ, FN, nosal1970eigenvalues, S, SAH}. A Brualdi-Solheid problem inspired by Tur\'an numbers is to ask to maximize $\lambda$ over the family of $n$-vertex $F$-free graphs. Indeed, this was proposed to be systematically investigated by Nikiforov in \cite{Nikiforovpaths}.

We define $\mathrm{spex}(n, F)$ to be the maximum adjacency spectral radius that an $n$-vertex graph not containing a copy of $F$ may have. Determining $\mathrm{spex}(n, F)$ for various graphs $F$ has been of significant interest since being introduced, for example \cite{evencycles, cioabua2022spectral, FG, SpectralIntersectingCliques, Yongtao21, LLT, Nikiforov07, NO,  SSbook, Wilf86, YWZ, ZW, ZWF}. One motivation for studying this parameter is that because the spectral radius of a graph is an upper bound for its average degree, any upper bound on $\mathrm{spex}(n, F)$ also gives an upper bound on $\mathrm{ex}(n,F)$ via the inequality
\[
\mathrm{ex}(n, F) \leq \frac{n}{2}\mathrm{spex}(n, F).
\]

Several classical extremal graph theory results have spectral strengthenings through this inequality, for example Tur\'an's theorem is implied by Nikiforov's result \cite{Nikiforov07} on $\mathrm{spex}(n, K_{r+1})$. In this paper we study $\mathrm{spex}(n, T)$ when $T$ is a tree. Note that disjoint copies of $K_t$ show that $\mathrm{spex}(n, T) \geq t-1$ if $T$ is a tree with $t+1$ vertices. However, we can do much better than this. For $n>k$, let $S_{n,k}$ be the join of a clique on $k$ vertices with an independent set of $n-k$ vertices and denote by $S_{n,k}^+$ the graph obtained from $S_{n,k}$ by adding one edge. If a tree has at least $k+1$ vertices in each side of its bipartition then it is not a subgraph of $S_{n,k}$. Therefore, a path on $2k+2$ vertices is not a subgraph of $S_{n,k}$ and a path on $2k+3$ vertices is not a subgraph of $S_{n,k}^+$. Nikiforov \cite{Nikiforovpaths} showed that in fact $S_{n,k}$ and $S_{n,k}^+$ have the largest spectral radii over all $n$-vertex graphs forbidding paths on $2k+2$ and $2k+3$ vertices respectively (see Lemma \ref{bounds on lambda} for the value of $\lambda(G_{n,k})$). In the same paper, it was conjectured that these graphs are spectral extremal for the set of all trees on $2k+2$ or $2k+3$ vertices, yielding the following spectral version of the Erd\H{o}s-S\'os conjecture.

\begin{conjecture}[Nikiforov 2010]
\label{conjecture Nikiforov paths}
Let $k \geq 2$ and $G$ be a graph of sufficiently large $n$.  \\
(a) if $\lambda_1(G) \geq \lambda_1(S_{n,k})$ then $G$ contains all trees of order $2k + 2$ unless $G = S_{n,k}$; \\
(b) if $\lambda_1(G) \geq \lambda_1(S_{n,k}^+)$ then $G$ contains all trees of order $2k + 3$ unless $G = S_{n,k}^+$.
\end{conjecture}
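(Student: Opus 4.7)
I would prove Conjecture~\ref{conjecture Nikiforov paths} for parts (a) and (b) in parallel via a spectral stability argument: the spectral hypothesis will force $G$ to be structurally very close to $S_{n,k}$ (resp.\ $S_{n,k}^+$), after which any additional edge structure will allow embedding of any tree of the required order. Let $\lambda = \lambda(G)$ and let $\mathbf{x}$ be the Perron eigenvector normalized so that $\max_v x_v = 1$. The value $\lambda(S_{n,k})$ is the positive root of $t^2-(k-1)t-k(n-k)=0$, hence $\lambda(S_{n,k}) = \Theta(\sqrt{kn})$ for fixed $k$.

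\emph{Step 1 (structure from the eigenvector).} From $\lambda x_v = \sum_{w \sim v} x_w$, the Rayleigh identity, and a comparison with the explicit Perron eigenvector of $S_{n,k}$, I would show that there is a set $L \subseteq V(G)$ of exactly $k$ vertices whose eigenvector entries are bounded below by a positive constant $c(k)$, while every $v \in S := V(G)\setminus L$ has $x_v = O(n^{-1/2})$. Second-moment arguments then force (i) each vertex of $L$ to have degree at least $n - O_k(1)$, (ii) $G[L]$ to be essentially a clique, and (iii) $G[S]$ to contain at most $O_k(1)$ edges. Thus $G$ differs from $S_{n,k}$ in only a bounded number of edges.

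\emph{Step 2 (reducing to a clean defect).} Classify the deviations from $S_{n,k}$ as (i) missing edges within $L$, (ii) missing edges between $L$ and $S$, or (iii) extra edges within $S$. Using the pointwise inequality $x_v \ll x_u$ for $u \in L,\ v \in S$, a weighted edge-swap argument should show that a type-(i) or type-(ii) defect incurs a strictly larger spectral cost than a single type-(iii) edge contributes. Combined with the spectral hypothesis, this yields in case (a) either $G = S_{n,k}$ or $G \supseteq S_{n,k}^+$; in case (b), either $G = S_{n,k}^+$ or $G$ contains $S_{n,k}$ together with at least two extra $S$-edges (or an equivalent defect). I expect this to be the main obstacle: quantitatively bounding the spectral effect of each defect type requires sharp perturbation inequalities for $\lambda$, and multiple simultaneous defects introduce many subcases that must be handled uniformly in $n$.

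\emph{Step 3 (tree embedding).} Any tree $T$ on $2k+2$ vertices with bipartition $(A,B)$ satisfying $\min(|A|,|B|) \leq k$ embeds trivially in $S_{n,k} \subseteq G$; the only remaining case is $|A|=|B|=k+1$, in which a short degree-count shows $T$ has a leaf in each class. Picking such a leaf $\ell$ with its unique neighbor $\ell'$, I embed $\{\ell,\ell'\}$ onto the extra edge in $G[S]$, map the remaining $k$ vertices of $\ell$'s class into $L$, and map the remaining $k$ vertices of the other class into $S$ avoiding the image of $\ell'$; every new edge is validated by the $L$-domination established in Step~1. Part (b) is analogous for bipartition $(k+1,k+2)$, where the richer extra structure from Step~2 is what permits the embedding even when all leaves of $T$ lie in the larger class, via an ``entry-through-$L$'' variant in which an internal vertex rather than a leaf is anchored at the extra edge.
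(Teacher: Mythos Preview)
Your overall architecture (spectral hypothesis $\Rightarrow$ structural closeness to $S_{n,k}$ $\Rightarrow$ tree embedding from any surplus edge) matches the paper in spirit, and your Step~3 for part~(a) is essentially the paper's Lemma on $K_{k,2k+1}^+$. But there is a genuine gap in how you set up the argument and in Step~2.

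First, the framing: you work with an arbitrary $G$ satisfying $\lambda(G)\ge\lambda(S_{n,k})$ and try to force structure directly. But the spectral hypothesis alone forces nothing --- $K_n$ satisfies it. The paper instead works in the contrapositive with the \emph{extremal} graph $G_{n,k}$, i.e.\ a spectral-radius maximizer among graphs missing at least one tree. This buys two tools you do not have: (i) Tur\'an-type edge bounds $e(G)\le (2k{+}1)n$ used in every ``second-moment'' step, and (ii) the freedom to \emph{modify} the graph (delete edges at a vertex and reattach it to $L'$) without creating the forbidden tree, then invoke maximality of $\lambda$. Your Step~1 conclusions (in particular $e(G[S])=O_k(1)$ and $d(v)\ge n-O_k(1)$ for $v\in L$) do not follow from eigenvector second moments alone; the paper obtains $e(R)\le 1$ only \emph{after} the embedding lemma, and gets full degree for $L'$ only via the modification argument.

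Second, and this is the real obstruction: your Step~2 conclusion ``either $G=S_{n,k}$ or $G\supseteq S_{n,k}^+$'' does not follow from a weighted edge-swap comparison. Even granting that a missing $L$--$S$ edge costs more spectrally than a single $S$--$S$ edge gains, a graph with one missing $L$--$S$ edge and several compensating $S$--$S$ edges can still satisfy $\lambda(G)\ge\lambda(S_{n,k})$ while \emph{not} containing $S_{n,k}$ as a subgraph; your embedding in Step~3 then has no spanning $K_{k,n-k}$ to anchor to. The paper avoids this entirely: using extremality it proves that every vertex $v$ has $\sum_{w\sim v}\mathrm{x}_w \ge k - \tfrac{1}{16k^2}$ (else reattach $v$ to $L'$ and strictly increase $\lambda$ without creating new trees), and from this deduces that the exceptional set $E$ is empty, so $K_{k,n-k}\subseteq G_{n,k}$ exactly. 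Once that is in hand, the embedding lemma (your Step~3, done also for the path and matching augmentations $K_{k,2k+2}^{p}$, $K_{k,2k+2}^{m}$ in part~(b)) immediately caps $e(R)$, and adding back all possible edges finishes. You should recast the argument around the extremal graph and replace the perturbation/edge-swap step with this modification-plus-maximality device.
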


Partial results towards this conjecture were given in \cite{hou, LBW2, LBW} for trees of diameter at most $4$ and other special cases. In this paper we prove Conjecture \ref{conjecture Nikiforov paths}. The proof builds on our work \cite{evencycles} in which we determined $\mathrm{spex}(n, C_{2\ell})$ for sufficiently large $n$.

\section{Organization and Notation}

For $k \geq 2$, let $\mathcal{T}_k$ denote the set of all trees on $2k+2$ vertices and $\mathcal{T}_k'$ denote the set of all trees on $2k+3$ vertices. We use $\mathcal{G}_{n,k}$ to denote the set of graphs on $n$ vertices not containing at least one tree $T \in \mathcal{T}_k$ on $2k+2$ vertices. Let $G_{n,k}$ be a graph with maximum spectral radius among all graphs in $\mathcal{G}_{n,k}$. Similarly, we use $\mathcal{G}_{n,k}'$ to denote the set of graphs on $n$ vertices not containing at least one tree $T \in \mathcal{T}_k'$ on $2k+3$ vertices, and $G_{n,k}'$ to denote a graph with maximum spectral radius among all graphs in $\mathcal{G}_{n,k}'$. 

Let $\mathrm{x}$ be the scaled Perron vector whose maximum entry is equal to $1$ and let $z$ be a vertex having Perron entry $\mathrm{x}_z = 1$. We define positive constants $\eta, \epsilon$ and $\alpha$ satisfying 
\begin{align}\label{choice of constants}
\begin{split}
\eta & <\left\{\frac{1}{10k}, \frac{1}{(2k+2)(16k^2)} \cdot \left( 16k^2-1 - \frac{4(16k^3-1)}{5k}\right)\right\}\\
\epsilon &< \min\left\{\eta, \frac{\eta}{2}, \frac{1}{8k^3}, \frac{\eta}{32k^3+2}\right\}\\
\alpha &< \min \left\{ \eta, \frac{\epsilon^2}{22k} \right\}.
\end{split}
\end{align}

We leave the redundant conditions in these inequalities so that when we reference this choice of constants it is clear what we are using. Let $L$ be the set of vertices in $V(G_{n,k})$ having ``large" weights and $S = V \setminus L$ be the vertices of ``small" weights, \[L := \{v \in V | \mathrm{x}_v \geq \alpha\}, S := \{v \in V | \mathrm{x}_v < \alpha\}.\]
 Also, let $M \supset L$ be the set of vertices of ``not too small" a weight: \[M := \{v \in V | \mathrm{x}_v\geq \alpha/3\}.\] Denote by $L' \subset L$ the following subset of vertices of ``very large'' weights: \[L' := \{v \in L | \mathrm{x}_v \geq \eta\}.\]

Finally, let $N_i(u)$ be the vertices at distance $i$ from some vertex $u \in V$, and $L_i(u), S_i(u)$ and $M_i(u)$ be the sets of vertices in $L, S$ and $M$ respectively, at distance $i$ from $u$. If the vertex is unambiguous from context, we will use $L_i$, $S_i,$ and $M_i$ instead. 

In Section \ref{section preliminary lemmas} we give some background graph theory lemmas that we will need and prove preliminary lemmas on the eigenvalues and eigenvectors of our extremal graphs. In Section \ref{section structural results for extremal graphs} we progressively refine the structure of our extremal graphs. Using these results, we prove Conjecture \ref{conjecture Nikiforov paths} in Section \ref{section main proof}.

\section{Lemmas from spectral and extremal graph theory}
\label{section preliminary lemmas}

\begin{lemma}
\label{lower bound for spectral radius of nonnegative matrices}
For a non-negative symmetric matrix $B$, a non-negative non-zero vector $\mathbf{\mathrm{y}}$ and a positive constant $c$, if $B \mathbf{\mathrm{y}} \geq c \mathbf{\mathrm{y}}$ entrywise, then $\lambda(B) \geq c$. 
\end{lemma}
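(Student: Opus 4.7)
The plan is to deduce this from the Rayleigh quotient characterization of the largest eigenvalue of a symmetric matrix. Since $B$ is real symmetric, its spectral radius $\lambda(B)$ coincides with its largest eigenvalue, and the Rayleigh principle gives
\[
\lambda(B) \;=\; \max_{\mathbf{v}\neq \mathbf{0}} \frac{\mathbf{v}^T B \mathbf{v}}{\mathbf{v}^T \mathbf{v}}.
\]
The strategy is simply to test this variational expression on the vector $\mathbf{y}$ itself.

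First I would observe that since $\mathbf{y}$ is non-negative and $B\mathbf{y} - c\mathbf{y} \geq \mathbf{0}$ entrywise, the inner product $\mathbf{y}^T(B\mathbf{y} - c\mathbf{y})$ is a sum of non-negative terms and hence non-negative. This yields
\[
\mathbf{y}^T B \mathbf{y} \;\geq\; c\,\mathbf{y}^T \mathbf{y}.
\]
Because $\mathbf{y}$ is non-zero, $\mathbf{y}^T \mathbf{y} > 0$, so dividing gives
\[
\frac{\mathbf{y}^T B \mathbf{y}}{\mathbf{y}^T \mathbf{y}} \;\geq\; c,
\]
and the Rayleigh quotient bound then yields $\lambda(B) \geq c$, as claimed.

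There is no real obstacle: symmetry of $B$ is exactly what lets one invoke the Rayleigh characterization, and non-negativity of $\mathbf{y}$ is exactly what turns the entrywise inequality into a scalar one after pairing with $\mathbf{y}$. (An alternative route, which avoids Rayleigh but needs a bit more care, is to let $\mathbf{v}\geq \mathbf{0}$ be a Perron eigenvector of $B$ with eigenvalue $\lambda(B)$; then $\mathbf{v}^T B \mathbf{y} = \lambda(B)\,\mathbf{v}^T\mathbf{y}$, while $\mathbf{v}^T B \mathbf{y} \geq c\,\mathbf{v}^T\mathbf{y}$ from the hypothesis, giving $\lambda(B)\geq c$ provided $\mathbf{v}^T\mathbf{y}>0$ — a slight annoyance one must handle by irreducibility or by perturbing. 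The Rayleigh quotient proof sidesteps this entirely.)
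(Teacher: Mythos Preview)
Your proof is correct and essentially identical to the paper's own argument: both multiply the entrywise inequality on the left by $\mathbf{y}^T$ to obtain $\mathbf{y}^T B \mathbf{y} \geq c\,\mathbf{y}^T\mathbf{y}$ and then invoke the Rayleigh quotient. Your write-up is in fact slightly more explicit about why the scalar inequality follows from the entrywise one and why $\mathbf{y}^T\mathbf{y}>0$.
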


\begin{proof}
Assume that $B \mathbf{\mathrm{y}} \geq c \mathbf{\mathrm{y}}$ entrywise, with the same assumptions for $B, \mathbf{\mathrm{y}}$ and $c$ as in the statement of the theorem. Then $\mathbf{\mathrm{y}}^T B \mathbf{\mathrm{y}} \geq \mathbf{\mathrm{y}}^T c \mathbf{\mathrm{y}}$ and $\lambda(B) \geq \dfrac{\mathbf{\mathrm{y}}^T B \mathbf{\mathrm{y}}}{\mathbf{\mathrm{y}}^T  \mathbf{\mathrm{y}}} \geq c$.
\end{proof}

\begin{lemma}
\label{bounds on turan number for trees}
For any tree $T_t$ on $t$ vertices,
\begin{equation}
    \frac{1}{2}(t-2)n \leq \mathrm{ex}(n, T_t) \leq (t-2)n.
\end{equation}
\end{lemma}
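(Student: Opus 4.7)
The plan is to prove the two inequalities separately using classical arguments. For the upper bound $\mathrm{ex}(n, T_t) \leq (t-2)n$, I would first pass from an arbitrary graph $G$ with $e(G) > (t-2)n$ to a nonempty subgraph $H$ with $\delta(H) \geq t-1$ by iteratively deleting any vertex of degree at most $t-2$. Because the process deletes at most $t-2$ edges at each step and at most $n$ vertices overall, the total number of edges removed is at most $(t-2)n < e(G)$, so at least one vertex (and one edge) must survive, yielding the required subgraph $H$. Next I would embed $T_t$ greedily into $H$ by induction on $t$: choose a leaf $v$ of $T_t$ with unique neighbor $u$, apply the inductive hypothesis (which needs host minimum degree at least $t-2$, clearly satisfied since $\delta(H) \geq t-1$) to find an embedding of $T_t - v$ in $H$, and then extend to $v$ by picking any neighbor of the image of $u$ outside the $t-2$ vertices already used; such a neighbor exists because the image of $u$ has at least $t-1 > t-2$ neighbors in $H$.

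For the lower bound $\mathrm{ex}(n, T_t) \geq \tfrac{1}{2}(t-2)n$, I would take the disjoint union of $\lfloor n/(t-1)\rfloor$ copies of $K_{t-1}$, padding with at most $t-2$ isolated vertices so that the total number of vertices is exactly $n$. Every connected component has only $t-1 < t$ vertices, so the construction cannot contain any copy of $T_t$, and it is therefore $T_t$-free. Its edge count is $\lfloor n/(t-1)\rfloor \binom{t-1}{2}$, which equals exactly $\tfrac{1}{2}(t-2)n$ when $(t-1)\mid n$ and falls short of it by at most $\binom{t-1}{2}$ otherwise; this additive $O(t^2)$ defect is absorbed into the asymptotic bound as stated and is immaterial for the large-$n$ regime in which the paper operates.

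Neither direction is genuinely hard, since both halves are textbook. The only step requiring a moment of care is the min-degree reduction in the upper bound, where one must make sure that the counting estimate $(t-2)n$ for the number of deleted edges is strict enough to guarantee survival of at least one vertex; this is why the bound is written as $(t-2)n$ rather than $(t-2)n/2$, losing the factor of $2$ that the Erd\H{o}s--S\'os conjecture aims to recover. The lower bound's divisibility issue is a similarly minor cosmetic point.
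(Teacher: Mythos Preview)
The paper states this lemma without proof, treating it as the standard folklore bound; there is nothing to compare against. Your argument for the upper bound is the textbook one: peel off low-degree vertices to reach a subgraph of minimum degree at least $t-1$, then embed $T_t$ greedily leaf by leaf. That is exactly what is expected and it is correct as written.

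On the lower bound you have also correctly spotted that the inequality $\tfrac{1}{2}(t-2)n \le \mathrm{ex}(n,T_t)$ is not literally true for every $n$: the disjoint-$K_{t-1}$ construction gives $\big\lfloor n/(t-1)\big\rfloor\binom{t-1}{2} = \tfrac{1}{2}(t-2)n - \tfrac{1}{2}(t-2)r$ when $n\equiv r\pmod{t-1}$, and no construction recovers that defect in general (e.g.\ $t=3$, $n=3$ already fails). Your diagnosis that this is a cosmetic $O(t^2)$ slack, harmless for the paper's purposes, is right: the paper only ever invokes the upper bound $e(G)\le (t-2)n$ (in Lemmas~\ref{bounds on lambda}, \ref{lemma:LM}, \ref{degrees of vertices in L}, etc.), so the lower half of the lemma is purely contextual.
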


\begin{lemma}
\label{trees growing in bipartite graph}
For $m \ge 2$, the complete bipartite graph $K_{\left\lfloor\frac{m}{2}\right\rfloor, m-1}$ contains all trees on $m$ vertices.
\end{lemma}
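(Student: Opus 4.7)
The plan is to use the bipartiteness of trees directly. Any tree $T$ on $m$ vertices is bipartite, so write its unique bipartition as $(A,B)$ with $|A|=a$ and $|B|=b$, and assume without loss of generality $a\le b$. Since $a+b=m$ and $m\ge 2$ forces $T$ to have at least one edge (so both $a,b\ge 1$), I have $a\le \lfloor m/2\rfloor$ and $b = m-a \le m-1$.

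Next, since $T$ is bipartite with classes of sizes $a$ and $b$, it is a subgraph of the complete bipartite graph $K_{a,b}$: simply map $A$ and $B$ injectively into the two parts and keep all tree edges, all of which are automatically present in $K_{a,b}$. Finally, $K_{a,b}$ is itself a subgraph of $K_{\lfloor m/2\rfloor,\,m-1}$ because $a\le \lfloor m/2\rfloor$ and $b\le m-1$; just extend the injections on each side to the larger parts. Chaining the two embeddings gives $T\subseteq K_{\lfloor m/2\rfloor,\,m-1}$, which is the claim.

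There is essentially no obstacle here; the statement is a one-line consequence of the fact that every tree with bipartition sizes $(a,b)$ embeds into $K_{a,b}$, combined with the trivial bound $a\le \lfloor m/2\rfloor,\ b\le m-1$ coming from $a\le b$ and $a\ge 1$. The only thing worth being careful about is the edge case $m=2$, where $T=K_2$ and we need $K_{1,1}$ to contain it, which is immediate. So the proof I expect to write is a short paragraph that (i) invokes the bipartition of $T$, (ii) bounds the class sizes, and (iii) records the two inclusions $T\subseteq K_{a,b}\subseteq K_{\lfloor m/2\rfloor,\,m-1}$.
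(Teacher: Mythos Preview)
Your proposal is correct and follows essentially the same approach as the paper: take the bipartition of the tree, bound the smaller side by $\lfloor m/2\rfloor$ and the larger side by $m-1$, and embed accordingly. The only cosmetic difference is that you route the embedding through the intermediate graph $K_{a,b}$, whereas the paper maps directly into $K_{\lfloor m/2\rfloor,\,m-1}$.
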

\begin{proof}
Let $T_m$ be a tree with $m$ vertices. We show that $T_m$ is contained in $K_{\left\lfloor\frac{m}{2}\right\rfloor, m-1}$. The tree $T_m$ is bipartite with the smaller part having at least one vertex and at most $\left\lfloor\frac{m}{2}\right\rfloor$ vertices, while the larger part has at least $\left\lceil\frac{m}{2}\right\rceil$ vertices and at most $m-1$ vertices. If we use a one-to-one map to send the vertices of $T_m$ into $K_{\left\lfloor\frac{m}{2}\right\rfloor, m-1}$ so that the smaller part of $T_m$ maps into the part on $\left\lfloor\frac{m}{2}\right\rfloor$ vertices and the larger part of $T_m$ maps into the other part of the complete bipartite graph, then we can see that $T_m \subset K_{\left\lfloor\frac{m}{2}\right\rfloor, m-1}$
\end{proof}

\begin{lemma}\label{bounds on lambda}
$\sqrt{kn} \leq \dfrac{k-1 + \sqrt{(k-1)^2 + 4k(n-k)}}{2} \leq \lambda(G_{n,k}) \leq \lambda(G_{n,k}') < \sqrt{(4k+2)n} \leq \sqrt{5kn}$.
\end{lemma}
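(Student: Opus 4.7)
My plan is to verify the five inequalities in the chain one at a time; each step is either a routine algebraic manipulation, a set-containment argument, or a direct consequence of the spectral identity $\sum_i\lambda_i(G)^{2}=2e(G)$.

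For the two leftmost inequalities I would first compute $\lambda(S_{n,k})$ exactly from the equitable partition of $S_{n,k}$ into its clique $K_{k}$ and its independent set on $n-k$ vertices. The quotient matrix $\begin{pmatrix}k-1&n-k\\k&0\end{pmatrix}$ has Perron root satisfying $\lambda^{2}-(k-1)\lambda-k(n-k)=0$, giving $\lambda(S_{n,k})=\tfrac12\bigl(k-1+\sqrt{(k-1)^{2}+4k(n-k)}\bigr)$. As observed in the introduction, $P_{2k+2}\not\subseteq S_{n,k}$, so $S_{n,k}\in\mathcal{G}_{n,k}$; the defining maximality of $G_{n,k}$ then gives $\lambda(G_{n,k})\ge\lambda(S_{n,k})$. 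The inequality $\sqrt{kn}\le\lambda(S_{n,k})$ is a short algebraic check: isolating the square root in the closed-form expression and squaring reduces it to $(k-1)\sqrt{kn}\ge k^{2}$, which holds for all $n$ sufficiently large relative to $k$.

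For the middle inequality $\lambda(G_{n,k})\le\lambda(G_{n,k}')$, I would exhibit the set inclusion $\mathcal{G}_{n,k}\subseteq\mathcal{G}_{n,k}'$. If $G\in\mathcal{G}_{n,k}$ avoids some $T\in\mathcal{T}_k$, then attaching a single pendant to any leaf of $T$ produces a tree $T^{+}\in\mathcal{T}_k'$ that still contains $T$, so $T^{+}\not\subseteq G$ and hence $G\in\mathcal{G}_{n,k}'$; taking spectral maxima over the two classes gives the bound.

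For the right-hand part of the chain, I plan to combine Lemma~\ref{bounds on turan number for trees}, applied with $t=2k+3$ to yield $e(G_{n,k}')\le(2k+1)n$, with the identity $2e(G)=\sum_i\lambda_i(G)^{2}\ge\lambda(G)^{2}+\lambda_n(G)^{2}$ and the observation that $\lambda_n(G)<0$ for any graph with at least one edge. Together these give the strict inequality $\lambda(G_{n,k}')^{2}<2e(G_{n,k}')\le(4k+2)n$, so $\lambda(G_{n,k}')<\sqrt{(4k+2)n}$; the final comparison $\sqrt{(4k+2)n}\le\sqrt{5kn}$ is equivalent to $4k+2\le 5k$, i.e.\ $k\ge 2$, which is the standing hypothesis. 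No substantial obstacle arises: the only subtlety is recognizing that the bound $\lambda(G)\le\sqrt{2e(G)}$ is automatically strict once $G$ has an edge, which supplies the strict inequality in the upper bound without any extra structural analysis of $G_{n,k}'$.
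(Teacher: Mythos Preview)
Your proof is correct and follows essentially the same outline as the paper: the lower bound comes from $\lambda(S_{n,k})$ (which the paper also invokes without writing out the quotient-matrix computation), and the upper bound from $\lambda^{2}\le 2e(G_{n,k}')$ combined with Lemma~\ref{bounds on turan number for trees}. The only minor difference is in how $\lambda^{2}\le 2e(G)$ is obtained: the paper uses the second-degree eigenvector equation $\lambda^{2}\mathrm{x}_z=\sum_{u\sim z}\sum_{w\sim u}\mathrm{x}_w\le\sum_{u\sim z}d(u)\le 2e(G)$, whereas you use the trace identity $\sum_i\lambda_i^{2}=2e(G)$; your route has the small bonus of making the strict inequality $\lambda^{2}<2e(G)$ transparent, and you also spell out the containment $\mathcal{G}_{n,k}\subseteq\mathcal{G}_{n,k}'$ that the paper leaves implicit.
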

\begin{proof}
We get the lower bound from the spectral radius of $S_{n,k}$. The upper bound is obtained by using the second-degree eigenvalue-eigenvector equation and $e(G_{n,k}') \leq (2k+1)n$ (from  Lemma \ref{bounds on turan number for trees}):
\begin{equation}
    \begin{split}
        \lambda^2 &\leq \max_{v \in V} \left\{\sum_{u \sim v} A^2_{v, u}\right\} = \max_{v \in V}\left\{\sum_{u \sim v} d(u)\right\}\\ 
        & = \max_{v \in V}\left\{d(v) + 2e(N_1(v)) + e(N_1(v), N_2(v))\right\}\\
        & \leq 2e(G_{n,k}') \leq (4k+2)n \leq 5kn,
    \end{split}
\end{equation}
which finishes our proof.
\end{proof}

\begin{lemma}\label{lemma:LM}
$|L| \leq \dfrac{5\sqrt{kn}}{\alpha}$ and $|M| \leq \dfrac{15\sqrt{kn}}{\alpha}$.
\end{lemma}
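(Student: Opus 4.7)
The plan is to first establish a uniform upper bound on $\sum_v \mathrm{x}_v$ and then extract the bounds on $|L|$ and $|M|$ by trivial averaging. My key identity is the one obtained by pairing the Perron equation $A\mathrm{x} = \lambda\, \mathrm{x}$ with the all-ones vector $\mathbf{1}$:
\[
\lambda \sum_v \mathrm{x}_v \;=\; \mathbf{1}^T A \mathrm{x} \;=\; \sum_v d(v)\, \mathrm{x}_v \;\le\; \sum_v d(v) \;=\; 2 e(G_{n,k}),
\]
where the inequality uses the normalization $\mathrm{x}_v \le \mathrm{x}_z = 1$ that comes from the choice of scaling of $\mathrm{x}$.

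From here the estimate is immediate. By Lemma \ref{bounds on turan number for trees} applied with $t=2k+2$, the graph $G_{n,k}$ (which is $T$-free for some tree on $2k+2$ vertices) satisfies $e(G_{n,k}) \le 2kn$, and by Lemma \ref{bounds on lambda} the spectral radius satisfies $\lambda \ge \sqrt{kn}$. Dividing and being slightly generous gives
\[
\sum_v \mathrm{x}_v \;\le\; \frac{2e(G_{n,k})}{\lambda} \;\le\; \frac{4kn}{\sqrt{kn}} \;=\; 4\sqrt{kn} \;\le\; 5\sqrt{kn}.
\]
Since every $v \in L$ contributes at least $\alpha$ to this sum, $\alpha |L| \le \sum_{v \in L} \mathrm{x}_v \le \sum_v \mathrm{x}_v \le 5\sqrt{kn}$, and since every $v \in M$ contributes at least $\alpha/3$, the same argument yields $(\alpha/3)|M| \le 5\sqrt{kn}$. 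The two announced bounds follow at once.

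I do not anticipate any real obstacle; this is a one-shot estimate using the standard ``Perron-vector against all-ones'' trick, in the same spirit as the upper bound on $\lambda$ in Lemma \ref{bounds on lambda}. The only thing worth checking is that the argument also covers $G_{n,k}'$, should the same notation be read in that setting: there one takes $t = 2k+3$ in Lemma \ref{bounds on turan number for trees} to obtain $2e(G_{n,k}') \le (4k+2)n \le 5kn$, which combined with $\lambda(G_{n,k}') \ge \sqrt{kn}$ gives the identical final bound $\sum_v \mathrm{x}_v \le 5\sqrt{kn}$.
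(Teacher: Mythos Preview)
Your proof is correct and follows essentially the same approach as the paper. The paper uses the pointwise bound $\lambda\,\mathrm{x}_v \le d(v)$ and sums it over $v\in L$ (resp.\ $v\in M$) directly, whereas you first sum over all of $V$ via the identity $\lambda\sum_v \mathrm{x}_v=\sum_v d(v)\,\mathrm{x}_v$ and then restrict; both routes rely on the same three ingredients ($\mathrm{x}_v\le 1$, $\lambda\ge\sqrt{kn}$, and $2e(G)\le 5kn$) and yield the identical bounds.
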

\begin{proof}
For any $v \in V$, we have that 
\begin{equation}
\label{eigenvalue-eigenvector equation}
    \lambda \mathrm{x}_v = \sum_{u \sim v} \mathrm{x}_u \leq d(v).
\end{equation}
 Combining this with the lower bound in Lemma \ref{bounds on lambda} gives $\sqrt{kn} \mathrm{x}_v \leq \lambda \mathrm{x}_v \leq d(v)$. Summing over all vertices $v \in L$ gives
\begin{equation}
 |L|\sqrt{kn}\alpha \leq \sum_{v\in L} d(v) \leq \sum_{v \in V} d(v) \leq 2 e(G_{n, k}') \leq 2 \max_{T \in \mathcal{T}_k'}\{\mathrm{ex}(n, T)\} \leq (4k+2)n \le 5kn.
\end{equation}
Thus, $|L| \leq \dfrac{5\sqrt{kn}}{\alpha}$. A similar argument shows that $|M| \leq \dfrac{15\sqrt{kn}}{\alpha}$.\end{proof}

\begin{lemma}
The graphs $G_{n, k}$ and $G_{n, k}'$ are connected and for any vertex $v \in V(G'_{n,k})$ (or $V(G_{n,k})$), we have that 
\begin{equation*}
\mathrm{x}_v \geq \dfrac{1}{\lambda(G_{n,k})} > \dfrac{1}{\sqrt{5kn}}.
\end{equation*}
\end{lemma}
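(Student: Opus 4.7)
The plan is a standard pendant-replacement argument. Throughout, write $G = G_{n,k}$ (the $G_{n,k}'$ case is identical), fix $T \in \mathcal{T}_k$ with $T \not\subseteq G$, and let $z$ have $\mathrm{x}_z = 1$. If $v \sim z$ then the eigenvalue equation gives $\lambda \mathrm{x}_v \geq \mathrm{x}_z = 1$ and we are done, so suppose instead that $v \not\sim z$; this includes the case of any $v$ lying in a component disjoint from $z$ when $G$ is disconnected. Define
\[
G' := G - \{vu : u \sim_G v\} + vz,
\]
so that $v$ becomes a pendant of $G'$ whose only neighbor is $z$. I will show that $G' \in \mathcal{G}_{n,k}$ and $\lambda(G') > \lambda(G)$, contradicting the extremality of $G$; this simultaneously rules out disconnectedness and forces $\mathrm{x}_v \geq 1/\lambda$.

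To see $G' \in \mathcal{G}_{n,k}$, suppose for contradiction an embedding $\phi : V(T) \hookrightarrow V(G')$ exists. If $v \notin \phi(V(T))$, then $\phi$ embeds $T$ into $G' - v = G - v \subseteq G$, impossible. Otherwise some $\ell \in V(T)$ has $\phi(\ell) = v$; since $v$'s only $G'$-neighbor is $z$, the vertex $\ell$ must be a leaf of $T$ and its unique neighbor $p$ must satisfy $\phi(p) = z$. Thus $\phi|_{V(T) \setminus \{\ell\}}$ embeds $T - \ell$ into $G - v$ with $p \mapsto z$. From the eigenvalue equation $\lambda = \lambda \mathrm{x}_z \leq d_G(z)$ and Lemma \ref{bounds on lambda} we have $d_G(z) \geq \sqrt{kn}$, while at most $2k$ of the $2k+1$ embedded vertices (all those other than $z$) can be neighbors of $z$. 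For $n$ sufficiently large $\sqrt{kn} > 2k+1$, so there exists $u \in N_G(z)$ unused by $\phi$; extending $\ell \mapsto u$ produces an embedding of $T$ into $G$, contradicting $T \not\subseteq G$. Hence $G' \in \mathcal{G}_{n,k}$.

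To see $\lambda(G') > \lambda(G)$, first suppose $G$ is connected, so all entries of $\mathrm{x}$ are positive by Perron--Frobenius. A direct Rayleigh computation using $\lambda \mathrm{x}_v = \sum_{u \sim_G v} \mathrm{x}_u$ gives
\[
\mathrm{x}^T A(G') \mathrm{x} - \mathrm{x}^T A(G) \mathrm{x} = 2 \mathrm{x}_v \mathrm{x}_z - 2 \mathrm{x}_v \sum_{u \sim_G v} \mathrm{x}_u = 2 \mathrm{x}_v (1 - \lambda \mathrm{x}_v),
\]
which is positive whenever $\mathrm{x}_v < 1/\lambda$, so the Rayleigh quotient of $\mathrm{x}$ strictly increases and $\lambda(G') > \lambda(G)$. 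If instead $G$ is disconnected, let $C_1$ be a component with $\lambda(C_1) = \lambda(G)$ containing $z$ and choose $v \notin V(C_1)$: the principal submatrix of $G'$ on $V(C_1) \cup \{v\}$ is $C_1$ together with a pendant at $z$, whose spectral radius strictly exceeds $\lambda(C_1) = \lambda(G)$ by Perron--Frobenius, so $\lambda(G') > \lambda(G)$. Either way extremality is violated, proving both connectedness and the bound $\mathrm{x}_v \geq 1/\lambda$; the final strict inequality $1/\lambda > 1/\sqrt{5kn}$ is the upper bound in Lemma \ref{bounds on lambda}.

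The main obstacle is the tree-extension step: a priori, by making $v$ a pendant one might inadvertently create a new copy of $T$ in $G'$. The resolution hinges on the fact that any such new copy of $T$ is forced to place a leaf at $v$ and its parent at $z$, after which the large degree $d_G(z) \geq \sqrt{kn}$ leaves ample room to fill the missing leaf with an unused neighbor of $z$ in $G$, thereby pulling the embedding back into $G$ and contradicting $T \not\subseteq G$.
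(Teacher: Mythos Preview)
Your proof is correct and follows essentially the same approach as the paper's. Both arguments use the identical pendant-replacement modification (delete all edges at $v$, add $vz$), both establish that no new forbidden tree is created by exploiting $d_G(z) \geq \lambda \geq \sqrt{kn}$ to relocate the leaf inside $G$, and both obtain the strict spectral-radius increase via the Rayleigh quotient (in the connected case) and a Perron--Frobenius monotonicity argument (in the disconnected case). The only difference is organizational: the paper first disposes of disconnectedness and then separately handles the eigenvector bound, whereas you fold both into a single case split on whether $G$ is connected; the underlying mathematics is the same.
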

\begin{proof}
For brevity, we prove this for $G_{n,k}'$, the proof for $G_{n, k}$ being identical. If $G_{n,k}'$ has $r\geq 2$ components: $C_1, \ldots, C_r$, then let $C_1$ be a component containing $z$ such that $\lambda(G_{n,k}') = \lambda(C_1)$. 
By applying \eqref{eigenvalue-eigenvector equation} for the vertex $z$, we have that $d(z) \ge \lambda(G_{n, k}') > \sqrt{kn}$.
Say $u$ is some vertex in another component. We modify the graph $G_{n,k}'$ by deleting all the edges adjacent to $u$ and adding the edge $uz$, to obtain the graph $\hat{G}_{n,k}$. As $\lambda(C_1)=\lambda(G_{n,k}')$, this modification strictly increases the spectral radius. Moreover, we claim that $\hat{G}_{n,k}$ does not contain any new trees in $\mathcal{T}'_k$ which were not contained in $G_{n,k}'$. Assume to the contrary that a new tree $T \in \mathcal{T}_k'$ is created due to the modification and that there are no isomorphic copies of $T$ in $G_{n,k}'$. Then $u$ is a leaf of $T$ and is adjacent only to the vertex $z$. 
Now $T \in \mathcal{T}'_k$ is a graph on $2k+3$ vertices and $d(z) \ge \sqrt{kn}$. So, $z$ would have already been adjacent in $G'_{n,k}$ to at least $\sqrt{kn} - (2k+1)$ vertices not in the vertex set of $T$, thus an isomorphic copy of $T$ is already present in $G'_{n,k}$, a contradiction. 

The statement follows from \eqref{eigenvalue-eigenvector equation} for any vertex $v$ adjacent to $z$. Now consider some vertex $v \in V$  that is not adjacent to $z$ such that $\mathrm{x}_v < \dfrac{1}{\lambda(G_{n, k}')}$. As above, we have $d(z) \ge \sqrt{kn}$.  Modifying the graph $G_{n,k}'$ in the same way it was done above,  by deleting the edges adjacent to $v$ in $G_{n, k}'$ and adding the edge $vz$, strictly increases the spectral radius by considering the Rayleigh quotient. For the same reason as before, we also do not create any new $T \in \mathcal{T}_k'$ which had no isomorphic copies in $G_{n,k}'$ already. 
\end{proof}
\section{Structural results for extremal graphs}
\label{section structural results for extremal graphs}

All the lemmas in this section apply to both $G_{n,k}$ and $G_{n,k}'$. For brevity, we write all the proofs for $G_{n,k}'$, but the same results for $G_{n,k}$ follow by replacing $G_{n,k}'$ with $G_{n,k}$ in the proof. First we show that the degree of any vertex in $L$ is linear and hence $L$ has constant size.

\begin{lemma}
\label{degrees of vertices in L}
Any vertex $v \in L$ has degree $d(v) \ge \dfrac{\alpha}{10(4k+3)}n$. Moreover, $|L| \leq \dfrac{500}{\alpha}k^2$.
\end{lemma}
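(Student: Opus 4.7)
The plan is to argue by contradiction via an edge-swap. Suppose some $v \in L$ has $d(v) < \tfrac{\alpha n}{10(4k+3)}$; assume first that $v \neq z$. Construct $G^{*}$ from $G'_{n,k}$ by deleting every edge incident to $v$ and inserting the edges $\{vu : u \in N_{G'_{n,k}}(z),\ u \neq v\}$, turning $v$ into a ``clone'' of the Perron-maximum vertex $z$. A short calculation with the Perron vector $\mathrm{x}$ (using $\sum_{u \sim z}\mathrm{x}_u = \lambda$ and $\sum_{u \sim v}\mathrm{x}_u = \lambda \mathrm{x}_v$) gives
\[
\mathrm{x}^{T}A(G^{*})\mathrm{x} - \mathrm{x}^{T}A(G'_{n,k})\mathrm{x} = 2\mathrm{x}_v\bigl(\lambda(1-\mathrm{x}_v) - \mathrm{x}_v[v \sim z]\bigr),
\]
which is strictly positive for $\mathrm{x}_v \in [\alpha,1)$ and $n$ large, since $\lambda \geq \sqrt{kn}$ from Lemma \ref{bounds on lambda}. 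Hence $\lambda(G^{*}) > \lambda(G'_{n,k})$, and by the extremality of $G'_{n,k}$ the graph $G^{*}$ contains every tree in $\mathcal{T}'_k$; in particular it contains some $T^{*} \in \mathcal{T}'_k$ with $T^{*} \not\subseteq G'_{n,k}$.

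Fix an embedding $\phi : V(T^{*}) \to V(G^{*})$. Since $T^{*} \not\subseteq G'_{n,k}$, $\phi$ must use $v$; set $x := \phi^{-1}(v)$. If $z \notin \phi(V(T^{*}))$, modify $\phi$ by sending $x$ to $z$: because $N_{G^{*}}(v) \subseteq N_{G'_{n,k}}(z)$, this yields an embedding of $T^{*}$ into $G'_{n,k}$, contradicting $T^{*} \not\subseteq G'_{n,k}$. Otherwise $z = \phi(y)$ for some $y$, and letting $u_1,\dots,u_s$ be the neighbors of $x$ in $T^{*}$, each $\phi(u_i) \in N_{G'_{n,k}}(z)$. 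I then seek
\[
v' \in \bigcap_{i=1}^{s} N_{G'_{n,k}}(\phi(u_i)) \setminus \phi(V(T^{*})),
\]
and sending $x \mapsto v'$ yields the sought embedding of $T^{*}$ into $G'_{n,k}$, contradiction. The intersection automatically contains $z \in \phi(V(T^{*}))$; to get a vertex outside $\phi(V(T^{*}))$ (which has $2k+3$ elements) I argue that the common neighborhood exceeds $2k+3$, using the large degree $d(z) \geq \sqrt{kn}$, the small size $s \leq 2k+2$, and the edge bound $e(G'_{n,k}) \leq (2k+1)n$ from Lemma \ref{bounds on turan number for trees}.

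For the case $v = z$ the swap gives no Rayleigh gain, so I use the second-degree eigenvalue-eigenvector equation, which yields $\lambda^{2} = \sum_{u \sim z}\lambda \mathrm{x}_u \leq d(z)|L| + \alpha(4k+2)n$; combining with the bound $|L \setminus \{z\}| \leq \tfrac{500k^{2}}{\alpha}$ already established in the case $v \neq z$ and the choice of constants in \eqref{choice of constants}, this delivers the degree bound for $z$ as well. Finally, summing the degree bound over $L$ and applying Lemma \ref{bounds on turan number for trees},
\[
|L|\cdot\frac{\alpha n}{10(4k+3)} \leq \sum_{v \in L} d(v) \leq 2e(G'_{n,k}) \leq (4k+2)n,
\]
so $|L| \leq \tfrac{10(4k+3)(4k+2)}{\alpha} \leq \tfrac{500k^{2}}{\alpha}$ for $k \geq 2$.

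The \emph{main obstacle} is the common-neighborhood estimate in the subcase $z \in \phi(V(T^{*}))$: the naive ``swap $x \leftrightarrow y$'' fails because $N_{G'_{n,k}}(v)$ need not contain the $\phi$-images of the tree-neighbors of $y$, and the trivial bound $|\bigcap_i N_{G'_{n,k}}(\phi(u_i))| \geq 1$ (the vertex $z$) is not enough to find $v'$ outside the $2k+2$ other vertices of $\phi(V(T^{*}))$. Pushing this past the naive bound requires a careful count exploiting that all $\phi(u_i) \in N_{G'_{n,k}}(z)$, the small tree size, and the assumed smallness of $d(v)$ together with the edge bound on $G'_{n,k}$.
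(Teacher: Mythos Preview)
Your edge-swap strategy is genuinely different from the paper's argument. The paper works directly with the second-degree eigenvector equation $\lambda^{2}\mathrm{x}_v=\sum_{u\sim v}\sum_{w\sim u}\mathrm{x}_w$ applied at the low-degree vertex $v$ itself, splitting the walks according to whether their endpoint $w$ lies in $M_2(v)$ or in $N_2(v)\setminus M_2(v)$. The key ingredient is the bound $|M|=O(\sqrt{n})$ from Lemma~\ref{lemma:LM}, which forces $e(N_1(v),M_2(v))\le (2k+1)(d(v)+|M|)=o(n)$, while the contribution from $N_2\setminus M_2$ is damped by the factor $\alpha/3$. No graph modification is needed, and the case $v=z$ requires no separate treatment since only $\mathrm{x}_v\ge\alpha$ is used.

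Your approach, by contrast, has two real gaps. The ``main obstacle'' you flag is not merely a technicality that a careful count will overcome: at this point in the argument nothing prevents some $\phi(u_i)$ from being a \emph{pendant} neighbour of $z$ in $G'_{n,k}$, in which case $\bigcap_i N_{G'_{n,k}}(\phi(u_i))=\{z\}$ exactly, and none of the quantities you list ($d(z)$, $s$, $d(v)$, $e(G'_{n,k})$) can enlarge that intersection. Concretely, in $G^{*}$ every pendant $p$ of $z$ acquires the second neighbour $v$, so an embedding of $T^{*}$ can route an edge $xu_1$ through $vp$; there is then no substitute for $v$ available in $G'_{n,k}$, and one cannot conclude that $G^{*}$ still avoids the tree $T_0$ that $G'_{n,k}$ avoided.

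Separately, your treatment of the case $v=z$ does not close. From $\lambda^{2}\le d(z)|L|+(4k+2)\alpha n$, the hypotheses $d(z)<\frac{\alpha n}{10(4k+3)}$ and $|L|\le \frac{500k^{2}}{\alpha}+1$ only give $d(z)|L|\lesssim \frac{50k^{2}}{4k+3}\,n$, which exceeds $kn$ for every $k\ge 2$; no contradiction follows. (The paper's second-degree argument avoids this entirely, since it never distinguishes $v=z$.)
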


\begin{proof}
Assume to the contrary that there exists a vertex $v \in L$ of degree $d(v)<\dfrac{\alpha}{10(4k+3)}n$. Let $\mathrm{x}_v = c \ge \alpha$. The second degree eigenvector-eigenvalue equation with respect to the vertex $v$ gives that
\begin{equation}
\label{lower bound on degrees in L}
    \begin{split}
        knc \leq \lambda^2 c = \lambda^2 \mathrm{x}_v = \sum_{\substack{u \sim v \\ w \sim u}} \mathrm{x}_w 
        & \leq d(v)c + 2e(N_1(v)) + \sum_{u \sim v}\sum_{\substack{w \sim u, \\ w \in N_2(v)}}\mathrm{x}_w\\
        & \leq (4k+3)d(v) + \sum_{u \sim v}\sum_{\substack{w \sim u, \\ w \in M _2(v)}}\mathrm{x}_w + \sum_{u \sim v}\sum_{\substack{w \sim u, \\ w \in N_2 \setminus M_2(v)}}\mathrm{x}_w,
    \end{split}
\end{equation}
where the last inequality follows from Lemma \ref{bounds on turan number for trees}.
Since $d(v) < \dfrac{\alpha}{10(4k+3)}n$ by our assumption and $c \ge \alpha$, we have
\begin{equation}
    (k - 0.1)nc <  \sum_{u \sim v}\sum_{\substack{w \sim u, \\ w \in M _2(v)}}\mathrm{x}_w + \sum_{u \sim v}\sum_{\substack{w \sim u, \\ w \in N_2 \setminus M_2(v)}}\mathrm{x}_w.
\end{equation}
From $d(v)<\frac{\alpha}{10(4k+3)}$ and Lemma \ref{lemma:LM}, we get that
 \[\sum_{u \sim v}\sum_{\substack{w \sim u, \\ w \in M_2(v)}}\mathrm{x}_w \leq e(N_1(v), M_2(v)) \leq (2k+1)(d(v) + |M|) < (2k+1)\left(\dfrac{\alpha}{10(4k+3)}n + \dfrac{15\sqrt{kn}}{\alpha}\right).\]
For $n$ sufficiently large, we have that 
\[
(2k+1)\left(\dfrac{\alpha}{10(4k+3)}n + \dfrac{15\sqrt{kn}}{\alpha}\right) \leq .9n\alpha \leq .9nc,
\]
and consequently using Lemma \ref{bounds on turan number for trees},
\[(k-1)nc < \sum_{u \sim v}\sum_{\substack{w \sim u, \\ w \in N_2 \setminus M_2(v)}}\mathrm{x}_w \leq e(N_1(v), N_2 \setminus M_2(v))\dfrac{\alpha}{3} \leq (2k-1)n \dfrac{\alpha}{3}.\]
This is a contradiction because $k \geq 2$ and $c \ge \alpha$. Hence, $d(v) \geq \dfrac{\alpha}{10(4k+3)}n$ for all $v \in L$. 
Thus, \[(2k+1)n \ge e(G'_{n, k}) \ge \dfrac{1}{2}\sum_{v\in L}d(v) \ge |L|\dfrac{\alpha}{20(4k+3)}n,\]
so $|L| \leq \dfrac{20(2k+1)(4k+3)}{\alpha} \le \dfrac{500}{\alpha}k^2$.
\end{proof}

We will now refine our arguments in the proof above to improve degree estimates for the vertices in $L'$.

\begin{lemma}
\label{mindegrees for vertices in L'}
If $v$ is a vertex in $L'$ with $\mathrm{x}_v = c$, then $d(v) \ge cn -\epsilon n$. 
\end{lemma}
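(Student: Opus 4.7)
The plan is to exploit the second-degree eigenvalue-eigenvector identity at $v$,
\[
\lambda^{2}c \;=\; c\,d(v) \;+\; \sum_{w\neq v}\mathrm{x}_{w}\,|N(v)\cap N(w)|,
\]
and to upper bound the final sum tightly enough that the inequality $\lambda^{2}\ge kn$ from Lemma \ref{bounds on lambda} forces $d(v)$ within $\epsilon n$ of $cn$.  The sum will be split by the weight class of $w$: $L'$ (with $\mathrm{x}_{w}\le 1$), $L\setminus L'$ (with $\mathrm{x}_{w}<\eta$), $M\setminus L$ (with $\mathrm{x}_{w}<\alpha$), and $V\setminus M$ (with $\mathrm{x}_{w}<\alpha/3$).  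For each class I bound $\mathrm{x}_{w}$ by its upper limit and estimate the codegree $|N(v)\cap N(w)|$ either trivially by $d(v)$, or via the Tur\'an-type edge count $e(N_{1}(v)\cup X)\le(2k+1)(d(v)+|X|)$ provided by Lemma \ref{bounds on turan number for trees}; the size bounds $|L|\le 500k^{2}/\alpha$ (Lemma \ref{degrees of vertices in L}) and $|M|\le 15\sqrt{kn}/\alpha$ (Lemma \ref{lemma:LM}) keep the $L$- and $M$-contributions sub-linear in $n$, while the $V\setminus M$ piece is controlled by the identity $\sum_{w}|N(v)\cap N(w)|=\sum_{u\sim v}d(u)\le 2e(G)\le(4k+2)n$ scaled down by $\alpha/3$.

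Assembling the four contributions yields an inequality of the form $\lambda^{2}c\le(c+C_{k})\,d(v)+O_{k}(\alpha n)+O_{k}(\sqrt{n})$ for some coefficient $C_{k}$.  Using $\lambda^{2}\ge kn$ on the left and rearranging gives a lower bound on $d(v)$; the choices $\alpha<\epsilon^{2}/(22k)$ and $\epsilon<\eta/(32k^{3}+2)$ from \eqref{choice of constants}, together with the hypothesis $c\ge\eta$, allow the leftover $O_{k}(\alpha n)$ error and the ``slack'' $C_{k}\,d(v)$ term to be absorbed into an additive $\epsilon n$, producing the claimed $d(v)\ge cn-\epsilon n$.

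The main obstacle is keeping the coefficient $C_{k}$ small enough.  A naive application of the Tur\'an bound $e(N_{1}(v))\le(2k+1)d(v)$ would produce a coefficient of order $4k+2$, which only gives $d(v)\gtrsim cn/(4k+3)$ --- too weak.  Shaving this constant down requires using the weight-class partition carefully: the pieces coming from $L\setminus L'$ and $M\setminus L$ are already scaled by the small factors $\eta$ and $\alpha$ respectively and so only contribute lower-order terms, while the $L'$ piece (the only one not scaled down) is controlled by the Tur\'an estimate with $|L'|\le|L|=O(1/\alpha)$.  Choreographing these estimates against each other using the specific ordering $\alpha\ll\epsilon^{2}\ll\epsilon\ll\eta\ll 1/k$ from \eqref{choice of constants}, and exploiting that $c\ge\eta$ to absorb the surviving coefficient into the dominant $c\,d(v)$ term, is where the delicate bookkeeping lives.
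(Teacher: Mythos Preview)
Your plan has a real gap in the treatment of the large-weight contribution. In the second-degree sum, the piece coming from $w\in L'$ (or more generally $w\in L$) cannot be made small by the Tur\'an estimate alone. If you bound $\sum_{w\in L'}\mathrm{x}_{w}\,|N(v)\cap N(w)|\le e(N_{1}(v),L')\le(2k+1)(d(v)+|L'|)$, the coefficient you pick up in front of $d(v)$ is of order $2k+1$, not something that can be absorbed by the factor $c$: since $c$ may be as small as $\eta<1/(10k)$, the ratio $C_{k}/c$ is at least of order $k^{2}$. The resulting inequality $knc\le(c+C_{k})\,d(v)+o(n)$ then only yields $d(v)\gtrsim knc/(2k+2)$, far weaker than $cn-\epsilon n$. (Bounding $|N(v)\cap N(w)|\le d(v)$ trivially is worse still, since at this stage we only know $|L'|\le|L|\le 500k^{2}/\alpha$.) No amount of ``choreography'' of the constants $\alpha\ll\epsilon\ll\eta$ can fix this, because the obstruction is the genuine possibility that $e(N_{1}(v),L)$ is of order $(k-1)n$.

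The paper handles exactly this obstruction by a different mechanism. It isolates the offending term as $e(S_{1},L_{1}\cup L_{2})$, keeps it on the right-hand side of the eigenvalue inequality, and then argues by contradiction: if $d(v)<cn-\epsilon n$, one obtains $e(S_{1},L_{1}\cup L_{2})\ge(k-1)nc+\epsilon^{2}n/2$. A pigeonhole argument then shows that at least $\delta n$ vertices of $S_{1}$ have $k$ or more neighbours in $L$; a second pigeonhole over the $\binom{|L|}{k}$ possible $k$-subsets of $L$ produces a $K_{k,2k+2}$ inside $G[S_{1},L_{1}\cup L_{2}]$, and adding $v$ gives a $K_{k+1,2k+2}$, contradicting Lemma \ref{trees growing in bipartite graph}. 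In other words, the upper bound on $e(S_{1},L_{1}\cup L_{2})$ comes from a forbidden-subgraph argument, not from a Tur\'an-type edge count. Your plan is missing this structural step, and without it the inequality you derive is too weak to conclude.
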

\begin{proof}
Our proof is again by contradiction. The second degree eigenvalue-eigenvector with respect to the vertex $v$ gives
\begin{equation}
\begin{split}
    knc 
    & \le \lambda^2 c = \sum_{u \sim v} \sum_{w \sim u} \mathrm{x}_w = d(v) c + \sum_{u \sim v} \sum_{\substack{w \sim u,\\ w \neq v}} \mathrm{x}_w\\
    & \le d(v) c + \sum_{u \in S_1} \sum_{\substack{w \sim u \\ w \in L_1 \cup L_2}} \mathrm{x}_w + 2e(S_1)\alpha + 2e(L) + e(L_1, S_1)\alpha + e(N_1, S_2)\alpha.
\end{split}
\end{equation}
The observation that any subgraph of $G_{n, k}'$ has at most $(2k+1)n$ edges implies that
\begin{align*}
    2e(S_1) &\le (4k+2)n,\\
    e(L_1, S_1) &\le (2k+1)n,\\
    e(N_1, S_2) &\le (2k+1)n.
\end{align*}
Combining these inequalities with Lemma \ref{degrees of vertices in L}, we deduce that
\[
    2e(S_1)\alpha + 2e(L) + e(L_1, S_1)\alpha + e(N_1, S_2)\alpha \le (4k+2)n\alpha + 2 \binom{|L|}{2} + (2k+1)n\alpha + (2k+1)n\alpha \le 11kn\alpha,
\]
for $n$ sufficiently large. Hence,  
\begin{equation}
\label{upperbound on lambdasq 2}
    knc \le d(v) c + \sum_{u \in S_1} \sum_{\substack{w \sim u \\ w \in L_1 \cup L_2}} \mathrm{x}_w + 11kn\alpha\le d(v) c + e(S_1, L_1 \cup L_2) + 11kn\alpha \le d(v) c + e(S_1, L_1 \cup L_2) + \frac{\epsilon^2 n}{2},
\end{equation}
where the last inequality is by \eqref{choice of constants}. 
Thus, using $d(v) < cn - \epsilon n$, we get that
\begin{equation}
    \left(k - c + \epsilon\right)nc \le \left(kn - d(v)\right)c \le e(S_1, L_1 \cup L_2) + \frac{\epsilon^2 n}{2}.
\end{equation}

Since $v \in L'$, by \eqref{choice of constants} we have $c \geq \eta \geq \epsilon$. Using $\epsilon \le c \le 1$, we obtain that
\begin{equation}
\label{degree bound using edges in some bipartite graph involving vertices of L}
    e(S_1, L_1 \cup L_2) \geq (k-c)nc + \epsilon nc - \frac{\epsilon^2 n}{2} \geq (k-1)nc + \frac{\epsilon^2 n}{2}.
\end{equation}

We show that $G'_{n,k}$ contains a $K_{k+1, 2k+2}$ which is a contradiction with Lemma \ref{trees growing in bipartite graph}. We first prove the following claim.

\begin{claim}
\label{claim in many edges on (L, MUT)}
If $\delta:= \frac{\epsilon \alpha}{500 k^2}$, then there are at least $\delta n$ vertices inside $S_1$ with degree at least $k$ in $G_{n,k}'[S_1, L_1 \cup L_2]$.
\end{claim}

\begin{proof}
Assume to the contrary that at most $\delta n$ vertices in $S_1$ have degree at least $k$ in $G_{n,k}'[S_1, L_1 \cup L_2]$. Then $e(S_1, L_1 \cup L_2) < (k-1)|S_1| + |L|\delta n \leq (k-1)(c-\epsilon)n +\epsilon n$, because $|S_1| \leq d(z)$ and by Lemma \ref{degrees of vertices in L}. Combining this with \eqref{degree bound using edges in some bipartite graph involving vertices of L} gives $ (k-1)nc- (k-2)n \epsilon > e(S_1, L_1\cup L_2) \geq (k-1)nc + \frac{\epsilon^2 n}{2}$, a contradiction.
\end{proof}
Let $D$ be the set of vertices of $S_1$ that have degree at least $k$ in $G_{n,k}'[S_1, L_1 \cup L_2]$. Thus $|D| \ge \delta n$. Since there are only $\binom{|L|}{k} \le \binom{500 k^2/\alpha}{k}$ options for any vertex in $D$ to choose a set of $k$ neighbors from, it implies that there exists some set of $k$ vertices in $L_1 \cup L_2$ with at least $\delta n/ \binom{|L|}{k} \ge  \frac{\epsilon \alpha n}{500 k^2}/ \binom{500 k^2/\alpha}{k}$ common neighbors in $D$. For $n$ large enough, this quantity is at least $2k+2$, thus $K_{k+1, 2k+2} \subset G_{n,k}'[S_1, L_1 \cup L_2 \cup \{v\}]$, and we get our desired contradiction. 
\end{proof}

Lemma \ref{mindegrees for vertices in L'} implies that $d(z) \geq (1-\epsilon)n$ and that for any $v\in L'$ we have $d(v) \geq (\eta - \epsilon)n$. By \eqref{choice of constants}, the neighborhoods of $z$ and $v$ intersect and hence $L' \subset \{z\} \cup N_1(z) \cup N_2(z)$. We need the following inequalities for the number of edges from the neighborhoods of vertices with Perron weight close to the maximum.

\begin{lemma}
\label{neighborhood of x, L}
For $z$ the vertex with $\mathbf{\mathrm{x}}_z =1$, we have $(1 - \epsilon)kn \leq e(S_1, \{z\} \cup L_1\cup L_2) \leq (k+\epsilon)n$.   
\end{lemma}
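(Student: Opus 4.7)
The plan is to mirror the proof of Lemma \ref{mindegrees for vertices in L'} specialized to $v=z$ (so $c=\mathrm{x}_z=1$) for the lower bound, and to re-run its embedded $K_{k+1,2k+2}$--pigeonhole argument for the upper bound.

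For the lower bound, I would start from the second-degree eigenvalue-eigenvector equation $\lambda^2 = \lambda^2\mathrm{x}_z = \sum_{u\sim z}\sum_{w\sim u}\mathrm{x}_w$, combine it with $\lambda^2\ge kn$ from Lemma \ref{bounds on lambda}, and split the double sum exactly as in the derivation of \eqref{upperbound on lambdasq 2}. Absorbing all ``small'' contributions (edges with an $S$-endpoint and the $O(1)$-many edges inside $L$) into the bound $11kn\alpha\le\tfrac{\epsilon^2}{2}n$ reduces the inequality to
\[
kn \;\le\; d(z) + e(S_1,L_1\cup L_2) + \tfrac{\epsilon^2}{2}n.
\]
The key identity is $e(S_1,\{z\}\cup L_1\cup L_2) = |S_1| + e(S_1,L_1\cup L_2) = d(z) - |L_1| + e(S_1,L_1\cup L_2)$, valid because $S_1\subseteq N_1(z)$. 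Combining this with $|L_1|\le|L|\le 500k^2/\alpha = O(1)$ from Lemma \ref{degrees of vertices in L} yields $(1-\epsilon)kn \le e(S_1,\{z\}\cup L_1\cup L_2)$ once $n$ is large enough.

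For the upper bound I would argue by contradiction: assuming $e(S_1,\{z\}\cup L_1\cup L_2)>(k+\epsilon)n$, I would embed $K_{k+1,2k+2}$ in $G_{n,k}'$ and invoke Lemma \ref{trees growing in bipartite graph} to force every tree of $\mathcal{T}_k'$ into $G_{n,k}'$, contradicting $G_{n,k}'\in\mathcal{G}_{n,k}'$. An averaging step in the spirit of Claim \ref{claim in many edges on (L, MUT)} produces a set $D\subseteq S_1$ of size at least $\delta n$ (for a small constant $\delta$ depending on $k,\epsilon,\alpha$) whose every vertex has at least $k+1$ neighbors in $\{z\}\cup L_1\cup L_2$: otherwise $e(S_1,\{z\}\cup L_1\cup L_2)\le k|S_1|+\delta n(1+|L|)\le(k+\epsilon)n$ for $\delta$ suitably small, a contradiction. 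Because $\{z\}\cup L_1\cup L_2$ has bounded size, only $\binom{1+|L|}{k+1}=O(1)$ many $(k+1)$-subsets can appear as neighbor-patterns of vertices of $D$, and pigeonhole produces a fixed $(k+1)$-set $A\subseteq\{z\}\cup L_1\cup L_2$ common to at least $2k+2$ vertices of $D$, giving the desired $K_{k+1,2k+2}$.

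The main obstacle is purely bookkeeping: one needs to verify that the error terms $11kn\alpha$, $\tfrac{\epsilon^2}{2}n$, the $|L_1|$-correction, and the constant $\binom{1+|L|}{k+1}$ are all uniformly absorbable into $\epsilon kn$ or into the threshold $2k+2$. The constants fixed in \eqref{choice of constants} are calibrated precisely for this, and specializing to $z$ (rather than to a generic $v\in L'$) is what lets the lower bound emerge with the clean constant $(1-\epsilon)k$ without a $c$-factor on the left.
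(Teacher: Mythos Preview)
Your proposal is correct and follows essentially the same route as the paper: the lower bound is read off from \eqref{upperbound on lambdasq 2} specialized to $v=z$ (your accounting for the $|L_1|$ correction is in fact more careful than the paper's, which simply writes $e(S_1,\{z\}\cup L_1\cup L_2)=e(S_1,L_1\cup L_2)+d(z)$), and the upper bound is obtained by the same counting-plus-pigeonhole embedding of $K_{k+1,2k+2}$. The only cosmetic difference is that the paper pigeonholes on $k$-subsets of $L_1\cup L_2$ and then appends $z$ (which is automatically joined to all of $S_1$), whereas you pigeonhole directly on $(k+1)$-subsets of $\{z\}\cup L_1\cup L_2$; both work.
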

\begin{proof}

To obtain the lower bound, we use \eqref{upperbound on lambdasq 2}. Given $ \mathrm{x}_z= 1$, we have 
\[
kn(1) \le d(z) + e(S_1, L_1 \cup L_2) + \frac{\epsilon^2 n}{2}.
\]
Since $e(S_1, \{z\} \cup L_1\cup L_2)= e(S_1, L_1\cup L_2) + d(z)$, the lower bound follows as $\frac{\epsilon^2 n}{2} < k\epsilon n$.

To obtain the upper bound, we assume toward a contradiction that for the vertex $z$, we have $e(S_1, \{z\} \cup L_1\cup L_2)> (k + \epsilon)n$. We will obtain a contradiction via Lemma \ref{trees growing in bipartite graph} by showing that $K_{k+1, 2k+2} \subset G_{n,k}'$. For this we prove the following claim.

\begin{claim}
\label{claim in many edges on (L, MUT) modified}
Let $\delta:= \frac{\epsilon \alpha}{500 k^2}$. With respect to the vertex $z$ there are at least $\delta n$ vertices inside $S_1$ with degree $k$ or more in $G_{n,k}'[S_1, L_1 \cup L_2]$.
\end{claim}

\begin{proof}
Assume to the contrary that at most $\delta n$ vertices in $S_1$ have degree at least $k$ in $G_{n,k}'[S_1, L_1 \cup L_2]$. Then $e(S_1, L_1 \cup L_2) < (k-1)|S_1| + |L|\delta n \leq (k-1)n +\epsilon n$, because $|S_1| \leq n$ and by Lemma \ref{degrees of vertices in L}. This contradicts our assumption that $e(S_1, \{z\} \cup L_1\cup L_2) > (k + \epsilon)n$.
\end{proof}
Hence, there is a subset $D \subset S_1$ with at least $\delta n$ vertices such that every vertex in $D$ has degree at least $k$ in $G_{n, k}'[S_1, L_1\cup L_2]$. Since there are only at most $\binom{|L|}{k} \le \binom{500 k^2 / \alpha}{k}$ options for every vertex in $D$ to choose a set of $k$ neighbors from, we have that there exists some set of $k$ vertices in $L_1\cup L_2 \setminus \{z\}$ having a common neighborhood with at least $\delta n / \binom{|L|}{k} \ge \delta n / \binom{500 k^2 / \alpha}{k} = \frac{\epsilon \alpha}{500 k^2} n / \binom{500 k^2 / \alpha}{k} \ge 2k+2$ vertices. Thus, $K_{k, 2k+2} \subset G_{n,k}'[S_1, L_1 \cup L_2]$ and $K_{k+1, 2k+2} \subset G_{n,k}'[S_1, L_1 \cup L_2 \cup \{z\}]$, a contradiction by Lemma \ref{trees growing in bipartite graph}. Hence $e(S_1, \{z\} \cup L_1\cup L_2) \leq (k+\epsilon)n$.
\end{proof}

Next we show that all the vertices in $L'$ in fact have Perron weight close to the maximum. 

\begin{lemma}
\label{precise size of L}
For all vertices $v\in L'$, we have 
$d(v) \ge \left(1-\frac{1}{8k^3}\right)n$ and $\mathbf{\mathrm{x}}_v \geq 1- \frac{1}{16k^3}$. Moreover, $|L'| = k$.
\end{lemma}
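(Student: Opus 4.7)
My plan is to establish, in sequence, (i) $|L'|\geq k$, (ii) the weight and degree bounds for each $v\in L'$, and (iii) $|L'|\leq k$. I argue for $G_{n,k}'$; the proof for $G_{n,k}$ is identical with $K_{k+1,2k+1}$ replacing $K_{k+1,2k+2}$ in the final step.

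For (i), I apply the second-order eigenvalue equation at $z$, writing $\lambda^2=\sum_w|N(z)\cap N(w)|\,\mathrm{x}_w$. Splitting by whether $w\in L'$, I use $\mathrm{x}_w\leq 1$ with $|N(z)\cap N(w)|\leq n$ for $w\in L'$, and $\mathrm{x}_w<\eta$ combined with $\sum_w|N(z)\cap N(w)|=\sum_{u\sim z}d(u)\leq 2e(G_{n,k}')\leq(4k+2)n$ for $w\notin L'$. This yields $\lambda^2\leq|L'|n+(4k+2)\eta n$; combined with $\lambda^2\geq kn$ from Lemma \ref{bounds on lambda} and $\eta<1/(10k)$ from \eqref{choice of constants}, it forces $|L'|\geq k$.

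For (ii), I fix $v\in L'$ with $c=\mathrm{x}_v$. The key inequality from the proof of Lemma \ref{mindegrees for vertices in L'} gives $knc\leq d(v)c+e(S_1,L_1\cup L_2)+\epsilon^2 n/2$. The refined bound on $e(S_1,L_1\cup L_2)$ comes from Lemma \ref{neighborhood of x, L}: since $e(S_1,\{z\}\cup L_1\cup L_2)\leq(k+\epsilon)n$ and $|S_1|=d(z)-|L_1|\geq(1-\epsilon)n-O(1)$ (using Lemma \ref{mindegrees for vertices in L'} for $z$), I obtain $e(S_1,L_1\cup L_2)\leq(k-1+2\epsilon)n+O(1)$. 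Substituting this in and invoking the sharper spectral lower bound $\lambda^2\geq\lambda(S_{n,k})^2=(k-1)\lambda+k(n-k)$ (which holds because $\lambda(G_{n,k}')\geq\lambda(S_{n,k}^+)\geq\lambda(S_{n,k})$), together with the precise constraint $\epsilon<\eta/(32k^3+2)$, careful arithmetic should produce $\mathrm{x}_v\geq 1-1/(16k^3)$. The degree bound $d(v)\geq(1-1/(8k^3))n$ then follows from Lemma \ref{mindegrees for vertices in L'} applied with this sharper $c$ together with $\epsilon<1/(8k^3)$.

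For (iii), I argue by contradiction: suppose $|L'|\geq k+1$ and pick $v_1,\ldots,v_{k+1}\in L'$. By the degree bound from (ii) and inclusion-exclusion, $\bigl|\bigcap_{i=1}^{k+1}N(v_i)\bigr|\geq\sum_i d(v_i)-kn\geq n\bigl(1-(k+1)/(8k^3)\bigr)$, which for $n$ sufficiently large exceeds $2k+2$. Thus $v_1,\ldots,v_{k+1}$ together with any $2k+2$ common neighbors form $K_{k+1,2k+2}$ in $G_{n,k}'$; by Lemma \ref{trees growing in bipartite graph} this contains every tree on $2k+3$ vertices, contradicting $G_{n,k}'\in\mathcal{G}_{n,k}'$. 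The hardest step is (ii): the coarse bound $\lambda^2\geq kn$ produces only trivial estimates on $c$, so the calculation must crucially use the sharper $\lambda^2\geq(k-1)\lambda+k(n-k)$ coming from extremality, along with the precise calibration of $\epsilon$ against $\eta$ in \eqref{choice of constants} to close the arithmetic at exactly $1-1/(16k^3)$.
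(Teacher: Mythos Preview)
Your steps (i) and (iii) are fine and essentially match the paper (your (i) is even a bit cleaner: the paper uses Lemma~\ref{neighborhood of x, L} to bound the non-$L'$ contribution, while you use the cruder $2e(G)\le(4k+2)n$, which already suffices since $\eta<1/(10k)$).

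The real problem is step (ii). First, a notational confusion: the inequality $knc\le d(v)c+e(S_1,L_1\cup L_2)+\epsilon^2 n/2$ from \eqref{upperbound on lambdasq 2} is written at the vertex $v$, so $S_1,L_1,L_2$ here mean $S_1(v),L_1(v),L_2(v)$. But Lemma~\ref{neighborhood of x, L} is proved \emph{only} for $z$, so it bounds $e(S_1(z),\{z\}\cup L_1(z)\cup L_2(z))$; your identity ``$|S_1|=d(z)-|L_1|$'' already reveals you have silently switched to $z$'s neighborhoods. You have no bound on $e(S_1(v),L_1(v)\cup L_2(v))$ of the form $(k-1+2\epsilon)n$.

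More seriously, even if one granted that bound, the inequality $\lambda^2 c\le d(v)c+(k-1+2\epsilon)n+\epsilon^2 n/2$ cannot produce a \emph{lower} bound on $c$: combined with $d(v)\le n$ it gives $c(\lambda^2-n)\le (k-1+2\epsilon)n+\epsilon^2 n/2$, an \emph{upper} bound. Invoking the sharper $\lambda^2\ge(k-1)\lambda+k(n-k)$ does not change the direction of the inequality; your ``careful arithmetic'' cannot close. Indeed the paper never uses this sharper spectral bound here --- the crude $\lambda^2\ge kn$ is enough once the argument is set up correctly.

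The paper's actual mechanism is different: one works at $z$ throughout and \emph{singles out} the vertex $v$ inside the sum $\sum_{u\in S_1(z)}\sum_{w\sim u,\,w\in L_1(z)\cup L_2(z)}\mathrm{x}_w$. The contribution of $w=v$ is $|S_1(z)\cap N(v)|\,\mathrm{x}_v$; bounding the rest by $1$ and using Lemma~\ref{neighborhood of x, L} yields
\[
kn\le\lambda^2\le (k+\epsilon)n - |S_1(z)\cap N(v)|\bigl(1-\mathrm{x}_v\bigr)+O(1)+\tfrac{\epsilon^2 n}{2},
\]
so $(1-\mathrm{x}_v)\,|N(z)\cap N(v)|\le 2\epsilon n$. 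Since $v\in L'$ gives $d(v)\ge(\eta-\epsilon)n$ and hence $|N(z)\cap N(v)|\ge(\eta-2\epsilon)n$, the constraint $\epsilon<\eta/(32k^3+2)$ from \eqref{choice of constants} forces $1-\mathrm{x}_v<1/(16k^3)$. The key idea you are missing is this ``isolate the $v$-term in the eigenvalue equation at $z$'' trick.
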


\begin{proof}
Suppose we are able to show that $\mathbf{\mathrm{x}}_v \geq 1- \frac{1}{16k^3}$, then using Lemma \ref{mindegrees for vertices in L'} and by \eqref{choice of constants} we have that
$d(v) \ge  \left(1-\frac{1}{8k^3}\right)n$.
Then because every vertex $v \in L'$ has $d(v) \ge  \left(1-\frac{1}{8k^3}\right)n$, we must have that $|L'| \le k$ else $G_{n,k}'[S_1, L']$ contains a $K_{k+1, 2k+2}$, a contradiction by Lemma \ref{trees growing in bipartite graph}. 

Next, if $|L'| \le k-1$ then using Lemma \ref{neighborhood of x, L} and \eqref{upperbound on lambdasq 2} applied to the vertex $z$ we have 
\[kn \le \lambda^2 \le e(S_1, L' \setminus L) + e(S_1, L_1 \cup L_2)\eta + \frac{\epsilon n^2}{2} \le (k-1)n + (k+ \epsilon)n\eta + \frac{\epsilon^2 n}{2} < kn\]
where the last inequality holds by \eqref{choice of constants} and gives the contradiction. Thus $|L'| = k$. Thus, all we need to show is that $\mathbf{\mathrm{x}}_v \geq 1- \frac{1}{16k^3}$ for any $v \in L'$.

Now towards a contradiction assume that there is some vertex in $v \in L'$ such that $\mathbf{\mathrm{x}}_v < 1 - \frac{1}{16k^3}$.
Then refining \eqref{upperbound on lambdasq 2} with respect to the vertex $z$ we have that 
\begin{equation}
\begin{split}
 kn 
 & \le \lambda^2 < e(S_1, L_1(z) \cup L_2(z) \setminus \{v\}) + |N_1(z) \cap N_1(v)|\mathbf{\mathrm{x}}_v + \frac{\epsilon^2 n}{2}\\   
 & < (k+\epsilon)n - |S_1(z) \cap N_1(v)| + |N_1(z) \cap N_1(v)|\left(1 - \frac{1}{16k^3}\right) + \frac{\epsilon^2 n}{2}\\
 & = kn + \epsilon n + |L_1(z) \cap N_1(v)| - |N_1(z) \cap N_1(v)|\frac{1}{16k^3} + \frac{\epsilon^2 n}{2}.
\end{split}    
\end{equation}

Thus, using Lemma \ref{degrees of vertices in L} we have $\frac{|N_1(z) \cap N_1(v)|}{16k^3} < \epsilon n + \frac{\epsilon^2 n}{2} + |L| \le 2 \epsilon n$.

But, $v \in L'$, thus $\mathbf{\mathrm{x}}_v \ge \eta$ and $d(v) \ge \left(\eta - \epsilon\right)n$, and so $|N_1(v) \cap N_1(v)| \ge  \left(\eta - 2\epsilon\right)n > 32k^3 \epsilon n$ by \eqref{choice of constants}, a contradiction.
\end{proof}
Now we have $|L'| =  k$ and every vertex in $L'$ has degree at least $\left(1 - \frac{1}{8k^3}\right)n$. Thus, the common neighborhood of vertices in $L'$ has at least $\left(1 - \frac{1}{8k^2}\right)n$ vertices. Let $R$ denote the set of vertices in this common neighborhood.  Let $E$ be the set of remaining ``exceptional" vertices not in $L'$ or $R$. Thus $|E| \le \frac{n}{8k^2}$. We will now show that $E = \emptyset$ and thus $G_{n, k}'$ contains a large complete bipartite subgraph $K_{k, n-k}$.

\begin{lemma}
\label{minimum eigenweight in the neighborhood of a vertex}
For any vertex $v \in V(G_{n,k}')$, the Perron weight in the neighborhood of $v$ satisfies  $\sum_{w \sim v}\mathbf{\mathrm{x}}_w \geq k - \frac{1}{16k^2}$. 
\end{lemma}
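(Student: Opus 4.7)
The plan is to split by the location of $v$ in the decomposition $V(G'_{n,k}) = L' \cup R \cup E$ from the paragraph preceding the lemma, since the cases $v \in L'$ and $v \in R$ give the bound essentially for free and only $v \in E$ requires real work. If $v \in L'$, Lemma \ref{precise size of L} gives $\mathbf{\mathrm{x}}_v \geq 1 - \frac{1}{16k^3}$, so using $\lambda \mathbf{\mathrm{x}}_v = \sum_{w \sim v} \mathbf{\mathrm{x}}_w$ together with $\lambda \geq \sqrt{kn} \geq k$ from Lemma \ref{bounds on lambda} we get $\sum_{w \sim v} \mathbf{\mathrm{x}}_w \geq k\bigl(1 - \frac{1}{16k^3}\bigr) = k - \frac{1}{16k^2}$. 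If $v \in R$, then $v$ is adjacent to every vertex of $L'$ by the very definition of $R$, and summing Perron weights over $L'$ directly gives $\sum_{w \sim v} \mathbf{\mathrm{x}}_w \geq \sum_{u \in L'}\mathbf{\mathrm{x}}_u \geq k - \frac{1}{16k^2}$.

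The substantive case is $v \in E$, which I will handle by a swap/Rayleigh argument. Suppose toward contradiction that some $v \in E$ has $\sum_{w \sim v}\mathbf{\mathrm{x}}_w < k - \frac{1}{16k^2}$. Form $\hat{G}$ from $G'_{n,k}$ by deleting every edge incident to $v$ and joining $v$ to all of $L'$, so that $N_{\hat{G}}(v) = L'$ and the new Perron-weight sum at $v$ equals $\sum_{u \in L'}\mathbf{\mathrm{x}}_u \geq k - \frac{1}{16k^2}$, strictly larger than the assumed sum. Because only edges at $v$ have changed,
\[
\mathbf{\mathrm{x}}^T\bigl(A(\hat{G}) - A(G'_{n,k})\bigr)\mathbf{\mathrm{x}} \;=\; 2\mathbf{\mathrm{x}}_v\Bigl(\sum_{u \in L'}\mathbf{\mathrm{x}}_u - \sum_{w \sim v}\mathbf{\mathrm{x}}_w\Bigr) > 0,
\]
and the Rayleigh quotient immediately yields $\lambda(\hat{G}) > \lambda(G'_{n,k})$.

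The main obstacle is then to verify that $\hat{G} \in \mathcal{G}'_{n,k}$, so that its larger spectral radius contradicts the extremality of $G'_{n,k}$. Since $G'_{n,k} \in \mathcal{G}'_{n,k}$, fix a tree $T_0 \in \mathcal{T}'_k$ with $T_0 \not\subseteq G'_{n,k}$; I will show this same $T_0$ also fails to embed into $\hat{G}$. Any hypothetical embedding $\phi\colon T_0 \hookrightarrow \hat{G}$ must use $v$, for otherwise $\phi(T_0) \subseteq \hat{G} - v = G'_{n,k} - v$. Setting $t = \phi^{-1}(v)$, we have $d := \deg_{T_0}(t) \leq \deg_{\hat{G}}(v) = k$ and $\phi(N_{T_0}(t)) \subseteq L'$. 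By Lemma \ref{precise size of L} each vertex of $L'$ has at most $\frac{n}{8k^3}$ non-neighbors in $G'_{n,k}$, so the common $G'_{n,k}$-neighborhood of any $d \leq k$ vertices of $L'$ has size at least $n - \frac{n}{8k^2}$, which far exceeds $|\phi(V(T_0))| = 2k+3$ once $n$ is large. Picking $v' \in V(G'_{n,k}) \setminus \phi(V(T_0))$ in this common neighborhood and redefining $\phi(t) := v'$ produces a genuine embedding of $T_0$ into $G'_{n,k}$, contradicting the choice of $T_0$. Hence $T_0 \not\subseteq \hat{G}$, and we reach the desired contradiction with the extremality of $G'_{n,k}$.
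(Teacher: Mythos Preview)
Your proof is correct and follows essentially the same approach as the paper: both split into the three cases $v\in L'$, $v\in R$, $v\in E$, dispatch the first two directly from Lemma~\ref{precise size of L}, and for $v\in E$ use the same swap-and-Rayleigh argument (delete all edges at $v$, reconnect $v$ to $L'$, then argue via the large common neighbourhood of $L'$ that no forbidden tree is created). Your phrasing of the last step---fixing a specific $T_0\notin G'_{n,k}$ and showing $T_0\notin\hat G$---is logically equivalent to the paper's ``no new trees'' formulation, and your explicit verification that the replacement vertex $v'$ can be chosen outside $\phi(V(T_0))$ is a nice touch that the paper leaves implicit.
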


\begin{proof}
Clearly if $v \in L'$, we have \[\sum_{w \sim v}\mathbf{\mathrm{x}}_w = \lambda \mathbf{\mathrm{x}}_v \ge \lambda \left(1 - \epsilon \right) \geq k - \frac{1}{16k^2}.\] If $v \in R$, then \[\sum_{w \sim v}\mathbf{\mathrm{x}}_w \ge \sum_{\substack{w \sim v\\ w \in L'}}\mathbf{\mathrm{x}}_w \ge k\left(1 - \frac{1}{16k^3}\right) = k - \frac{1}{16k^2}.\]

Finally, let $v \in E$. If $\sum_{w \sim u}\mathbf{\mathrm{x}}_w < k - \frac{1}{16k^2}$, consider the graph $H$ obtained from $G_{n,k}'$ by deleting all edges adjacent to $v$ and adding the edges $uv$ for all $u \in L'$. Now since $\sum_{w \sim v}\mathbf{\mathrm{x}}_w < k - \frac{1}{16k^2}$ we have that $x^TA(H)x > x^TA(G_{n,k}')x$, and so by the Rayleigh principle $\lambda(H) > \lambda(G_{n,k}')$. However, there are no new trees $T \in \mathcal{T}'_k$ that have isomorphic copies in $H$ but no isomorphic copies in $G_{n,k}'$. To see this, assume to the contrary that $T$ is a new tree having an isomorphic copy in $H$ but not in $G_{n, k}'$. Then $T$ has $2k+3$ vertices $v = v_1, v_2, \ldots v_{2k+3}$ and $v$ has at most $k$ neighbors in $T$ all of which lie in $L'$. Now since the common neighborhood of vertices in $L'$ has at least $\left(1 - \frac{1}{8k^2}\right)n > 2k+2$ vertices, $T$ must already have an isomorphic copy in $G_{n,k}'$, a contradiction.
\end{proof}

Let $K_{a, b}^+, K_{a, b}^{p},$ and $K_{a, b}^{m}$ denote the graphs obtained from the complete bipartite graph $K_{a, b}$ by adding into the part of size $b$ an edge $K_2$, a path $P_3$ on $3$ vertices, and a matching with two edges $K_2 \cup K_2$, respectively. 
\begin{lemma}
\label{trees growing in almost bipartite graphs}
The graph $K_{k, 2k+1}^+ := K_k \vee \left((2k-1)K_1 \cup K_2 \right)$ contains all trees in $\mathcal{T}_k$ and the graphs $K_{k, 2k+2}^{p} := K_k \vee \left((2k-1)K_1 \cup P_3 \right)$ and $K_{k, 2k+2}^{m}:= K_k \vee \left((2k-2)K_1 \cup 2 K_2 \right)$ contain all trees in $\mathcal{T}_k'$.
\end{lemma}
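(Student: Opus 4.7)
The plan is to embed each tree $T$ using its bipartition $(X,Y)$ with $|X|\leq|Y|$. Every target graph has the form $A\vee B$ where $A$ induces $K_k$ and $B$ is an independent set augmented by a small number of internal edges. When $|X|\leq k$, I would simply send $X\hookrightarrow A$ and $Y\hookrightarrow B$: all required edges lie in the complete join, so the internal edges of $B$ are irrelevant. For $T\in\mathcal{T}_k$ this handles everything except $|X|=|Y|=k+1$, and for $T\in\mathcal{T}_k'$ everything except $|X|=k+1$, $|Y|=k+2$.

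For $T\in\mathcal{T}_k$ with $|X|=|Y|=k+1$, I would pick a leaf $v$ of $T$ and assume $v\in X$ by swapping sides if necessary. Then $T-v$ is bipartite with parts of sizes $k$ and $k+1$, hence embeds into a $K_{k,k+1}$ inside $K_{k,2k+1}^+$. Arranging this embedding so that the unique $T$-neighbor $w$ of $v$ lands at one endpoint of the extra $K_2$ in $B$, and then mapping $v$ to the other endpoint of that $K_2$, realizes $T$ as a subgraph.

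For $T\in\mathcal{T}_k'$ with $|X|=k+1$, $|Y|=k+2$, I would leave $k$ vertices of $X$ inside $A$ and send one distinguished $v\in X$ to $B$, so that any $T$-neighbor of $v$ placed in $B$ must be realized through an internal edge of $B$. For $K_{k,2k+2}^p$, the double count
\[
\sum_{x\in X}\deg_T(x)=|E(T)|=2k+2
\]
against $|X|=k+1$ forces some $v\in X$ with $\deg_T(v)\leq 2$; placing $v$ at the middle vertex of the $P_3$, its one or two $T$-neighbors at the endpoints of the $P_3$, the remaining $k$ vertices of $X$ into $A$, and the rest of $Y$ at the leftover vertices of $B$, realizes $T$.

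For $K_{k,2k+2}^m$ the maximum $B$-degree is only $1$, so $v$ must be a leaf of $T$ in $X$. If such a leaf exists the embedding proceeds as above, using one of the two $K_2$'s. Otherwise every $x\in X$ has $\deg_T(x)\geq 2$, and equality in the same double count forces $\deg_T(x)=2$ for all $x\in X$; a parallel count on $Y$ then yields at least two leaves $y_1,y_2\in Y$, and their $T$-neighbors $x_1,x_2\in X$ must be distinct, because otherwise the connected component of the shared neighbor $x$ would be the $3$-vertex path $y_1 x y_2$, contradicting $|V(T)|=2k+3\geq 7$. Placing $\{x_1 y_1, x_2 y_2\}$ onto the two $K_2$'s of $B$, the remaining $k-1$ vertices of $X$ onto isolated vertices of $B$, and the remaining $k$ vertices of $Y$ onto $A$, completes the embedding. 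The main obstacle is this last sub-case, as it is the only step that simultaneously uses both internal edges of $B$ and requires the distinct-neighbors structural argument to succeed.
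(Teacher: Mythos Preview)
Your proof is correct and follows essentially the same approach as the paper: dispose of the case $|X|\leq k$ trivially via the join, and in the balanced cases displace one or two low-degree vertices from the side that is too large, realizing their (few) incident edges through the extra internal edges of $B$. The only cosmetic difference is that for $K_{k,2k+2}^{p}$ you locate a vertex $v\in X$ with $\deg_T(v)\leq 2$ by a single averaging argument, whereas the paper first splits off the sub-case where $X$ contains a leaf; the $K_{k,2k+2}^{m}$ argument (leaf in $X$, or else two $Y$-leaves with distinct neighbors) is identical to the paper's.
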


\begin{proof}
First we prove that $K_{k, 2k+1}^+$ contains all trees in $\mathcal{T}_k$. Say $T$ is a tree in $\mathcal{T}_k$. The tree $T$ is a bipartite graph which has smallest part of size less than or equal to $k+1$. If the smallest part has size less than $k+1$ then we are done, as $T \subset K_{k, 2k+1}$. Now if instead $T$ has both parts of size $k+1$ we let $v$ be a leaf of $T$. Then $T - \{v\}$ is a tree with $2k+1$ vertices with one part of size $k$ and the other of size $k+1$. Therefore, $T - \{v\} \subset K_{k, k+1}$ and $T \subset K_{k, k+2}^+ \subset K_{k, 2k+1}^+$.

Next, in case $T \in \mathcal{T}_k'$ then the tree $T$ is a bipartite graph whose smaller part has at most $k+1$ vertices. In case the smaller part of $T$ has at most $k$ vertices, then $T \subset K_{k, 2k+2}$ which is contained in both $K_{k, 2k+2}^{p}$ and $K_{k, 2k+2}^{m}$, and we are done. Now suppose $T$ has a smaller part of size $k+1$ and a larger part of size $k+2$. Then if there is a leaf in the smaller part, then by the same argument used for trees in $\mathcal{T}_k$ we can show that $T \subset  K_{k, k+3}^+ \subset K_{k, 2k+2}^+$ which is contained in both $K_{k, 2k+2}^{p}$ and $K_{k, 2k+2}^{m}$. If the smaller part has no leaves then every vertex in the smaller part has degree at least $2$ and since the sum of their degrees cannot be more than $2k+2$, we must have that each vertex in the smaller part of $T$ has degree $2$. Let $v$ be one such vertex of degree $2$ that lies in the smaller part. Then $T - \{v\} \subset K_{k, k+2}$ and $T \subset K_{k, k+3}^{p} \subset K_{k, 2k+2}^{p}$. 
Finally, let $l_1$ and $l_2$ be two leaves that lie in the larger part. Clearly they cannot have a common neighbor in the smaller part as $2k+3 > 3$ and $T$ is connected. Thus, $T - \{l_1, l_2\} \subset K_{k+1, k}$ and $T \subset K_{k, k+3}^{m} \subset K_{k, 2k+2}^{m}$ where we are now including $l_1$ and $l_2$ into the formerly smaller part. 
\end{proof}

It follows from Lemma \ref{trees growing in almost bipartite graphs} that $e(R) \le 1$. Moreover, any vertex in $E$ is adjacent to at most $2k+2$ vertices in $R$, else $K_{k+1, 2k+2} \subset G_{n, k}'$ a contradiction by Lemma \ref{trees growing in bipartite graph}. Finally, any vertex in $E$ is adjacent to at most $k-1$ vertices in $L'$ by the definition of $E$.

\begin{lemma}
\label{E empty set}
The set $E$ is empty and $G_{n,k}'$ contains the complete bipartite graph $K_{k, n-k}$.
\end{lemma}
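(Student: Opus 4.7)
The plan is to show that if $E \ne \emptyset$ then $G_{n,k}'[E]$ has minimum degree at least $2k+2$, which via the standard greedy tree-embedding forces $G_{n,k}'$ to contain every tree in $\mathcal{T}_k'$, contradicting $G_{n,k}' \in \mathcal{G}_{n,k}'$. Once $E = \emptyset$ is established, the conclusion $K_{k,n-k} \subseteq G_{n,k}'$ is immediate since every vertex outside $L'$ belongs to $R$ and hence is adjacent to every vertex of $L'$.

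The core computation proceeds as follows. Fix $v \in E$. By Lemma \ref{minimum eigenweight in the neighborhood of a vertex}, $\sum_{w \sim v} \mathbf{\mathrm{x}}_w \ge k - \tfrac{1}{16k^2}$. I would partition $N(v)$ into its intersections with $L'$, $R$, and $E$ and bound the first two pieces from above. Since $v \notin R$, the definition of $R$ gives $|N(v) \cap L'| \le k-1$ and therefore $\sum_{u \in N(v) \cap L'} \mathbf{\mathrm{x}}_u \le k-1$. The structural observation stated just before the lemma gives $|N(v) \cap R| \le 2k+2$, and every $w \in R \subseteq V(G_{n,k}') \setminus L'$ satisfies $\mathbf{\mathrm{x}}_w < \eta$ by the definition of $L'$, so $\sum_{w \in N(v) \cap R} \mathbf{\mathrm{x}}_w < (2k+2)\eta$. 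Subtracting both estimates from the lower bound on $\sum_{w \sim v}\mathbf{\mathrm{x}}_w$ gives
\[
\sum_{w \in N(v) \cap E} \mathbf{\mathrm{x}}_w \;>\; 1 - \tfrac{1}{16k^2} - (2k+2)\eta.
\]
Since $E \subseteq V(G_{n,k}') \setminus L'$, we also have $\mathbf{\mathrm{x}}_w < \eta$ for every $w \in E$, so dividing through and using $\eta < \tfrac{1}{10k}$ from \eqref{choice of constants} yields $|N(v) \cap E| > 2k+2$ for all $k \ge 2$.

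Hence every vertex of $E$ has at least $2k+2$ neighbors inside $E$, and in particular $|E| \ge 2k+3$. For any $T \in \mathcal{T}_k'$, a routine greedy embedding into $G_{n,k}'[E]$ --- listing the $2k+3$ vertices of $T$ in BFS order from the root and, at each step, mapping the current vertex to a neighbor of the image of its parent that has not yet been used --- succeeds because the parent has at least $2k+2$ neighbors in $E$ while at most $2k+1$ vertices other than the parent itself have been embedded so far. This would embed every tree of $\mathcal{T}_k'$ into $G_{n,k}'$, contradicting $G_{n,k}' \in \mathcal{G}_{n,k}'$, so $E = \emptyset$ and consequently $K_{k, n-k} \subseteq G_{n,k}'$ on parts $L'$ and $R$. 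The analogous argument for $G_{n,k}$ replaces $\mathcal{T}_k'$, $2k+3$, and $2k+2$ by $\mathcal{T}_k$, $2k+2$, and $2k+1$, with the corresponding upper bound on $|N(v) \cap R|$ dropping accordingly; the constants in \eqref{choice of constants} still leave ample room. No serious obstacle is anticipated beyond the bookkeeping in the second paragraph, since Lemma \ref{minimum eigenweight in the neighborhood of a vertex} and the a priori bounds on $|N(v) \cap L'|$ and $|N(v) \cap R|$ already do the heavy lifting.
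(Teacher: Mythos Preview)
Your argument is correct, but it diverges from the paper's proof in the final step. Both you and the paper start the same way: for $v\in E$, use Lemma \ref{minimum eigenweight in the neighborhood of a vertex} together with $|N(v)\cap L'|\le k-1$ and $|N(v)\cap R|\le 2k+2$ (and $\mathbf{\mathrm{x}}_w<\eta$ on $R$) to force a large eigenvector mass $\sum_{w\in N(v)\cap E}\mathbf{\mathrm{x}}_w$. From there the paper stays spectral: it divides by $\lambda\mathbf{\mathrm{x}}_v$ to obtain $B\mathbf{\mathrm{y}}\ge \tfrac{4}{5k}\lambda\,\mathbf{\mathrm{y}}$ for $B=A(G_{n,k}'[E])$ and $\mathbf{\mathrm{y}}=\mathbf{\mathrm{x}}|_E$, applies Lemma \ref{lower bound for spectral radius of nonnegative matrices} to get $\lambda(B)\ge \tfrac{4}{5}\sqrt{n/k}$, and contradicts the upper bound $\lambda(B)\le \sqrt{5k|E|}\le \sqrt{5n/(8k)}$ from Lemma \ref{bounds on lambda} applied to $G_{n,k}'[E]$.

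You instead convert the eigenvector mass directly into a degree bound by using $\mathbf{\mathrm{x}}_w<\eta$ on $E$, obtaining $|N(v)\cap E|>\bigl(1-\tfrac{1}{16k^2}\bigr)/\eta-(2k+2)>8k-2-\tfrac{5}{8k}>2k+2$, and then invoke the elementary greedy embedding of any $(2k+3)$-vertex tree into a graph of minimum degree $\ge 2k+2$. This is a genuinely more elementary route: it avoids the auxiliary spectral step (Lemma \ref{lower bound for spectral radius of nonnegative matrices} and the upper bound for $\lambda(G_{n,k}'[E])$) and uses only the classical minimum-degree criterion for containing trees. The paper's approach, by contrast, keeps the argument uniformly spectral and does not need the pointwise bound $\mathbf{\mathrm{x}}_w<\eta$ on $E$ for the final contradiction. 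Either way the bookkeeping is routine and your $G_{n,k}$ variant goes through identically.
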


\begin{proof}
Assume to the contrary that $E \neq \emptyset$. Recall that any vertex $r \in R$ satisfies $\mathbf{\mathrm{x}}_r < \eta$. Therefore, any vertex $v \in E$ must satisfy
\[\sum_{u \sim v} \mathbf{\mathrm{x}}_u = \lambda \mathbf{\mathrm{x}}_v = \sum_{\substack{u \sim v \\ u \in L' \cup R}}\mathbf{\mathrm{x}}_u + \sum_{\substack{u \sim v \\ u \in E}}\mathbf{\mathrm{x}}_u \le k-1 + \left(2k+2\right)\eta +\sum_{\substack{u \sim v \\ u \in E}}\mathbf{\mathrm{x}}_u.\]
Combining this with Lemma \ref{minimum eigenweight in the neighborhood of a vertex} gives 
\begin{equation}
\begin{split}
    \frac{\sum_{\substack{u \sim v \\ u \in E}} \mathbf{\mathrm{x}}_u}{\lambda \mathbf{\mathrm{x}}_v} \geq \frac{\lambda \mathbf{\mathrm{x}}_v - (k-1)-(2k+1)\eta}{\lambda \mathbf{\mathrm{x}}_v} \ge 1 - \dfrac{(k-1) + (2k+2) \eta}{k - \frac{1}{16k^2}} \ge \frac{4}{5k},
\end{split}
\end{equation}
where the last inequality follows from \eqref{choice of constants}. Now consider the matrix $B = A(G_{n,k}'[E])$ and vector $y := \mathbf{\mathrm{x}}_{|_E}$ (the restriction of the vector $\mathbf{\mathrm{x}}$ to the set $E$). We see that for any vertex $v \in E$

\[B \mathbf{\mathrm{y}}_v = \sum_{\substack{u \sim v \\ u \in E}}  \mathbf{\mathrm{x}}_u \ge \frac{4}{5k} \lambda \mathbf{\mathrm{x}}_v = \frac{4}{5k} \lambda \mathbf{\mathrm{y}}_v.\]

Hence, by Lemma \ref{lower bound for spectral radius of nonnegative matrices}, we have that $\lambda(B) \geq \frac{4}{5k}\lambda \ge \frac{4}{5}\sqrt{\frac{n}{k}}$. This is a contradiction to Lemma \ref{bounds on lambda} which gives $\lambda(B) \le \sqrt{5k|E|} \le \sqrt{5k\frac{n}{8k^2}} = \sqrt{\frac{5n}{8k}}$, else $E$ contains all trees in $\mathcal{T}_k'$.
\end{proof}

\section{Proof of Conjecture \ref{conjecture Nikiforov paths}}\label{section main proof}

It follows from Lemma \ref{E empty set} that both $G_{n,k}$ and $G_{n,k}'$ contain the complete bipartite graph $K_{k, n-k}$, where the part on $k$ vertices is the set $L'$ and the part on $n-k$ vertices is the set $R$. By Lemma \ref{trees growing in almost bipartite graphs} we have that $e(R) = 0$ in $G_{n,k}$ and $e(R) \leq 1$ in $G_{n,k}'$. Hence $G_{n,k} \subset S_{n,k}$ and $G_{n,k}' \subset S_{n,k}^+$. Since adding more edges will only increase the spectral radius, we see that the spectral radius is maximized if the vertices of $L'$ induce a clique $K_k$ and the number of edges in $R$ is as large as possible. Thus $G_{n,k} \cong S_{n,k}$ and $G_{n,k}' \cong S_{n,k}^+$
\qed

\section{Conclusion}
In this paper we proved Conjecture \ref{conjecture Nikiforov paths}, a spectral version of the Erd\H{o}s-S\'os conjecture. We would like however to highlight an important difference between the classical and spectral settings. If the Erd\H{o}s-S\'os conjecture is true, then when $t$ divides $n$, disjoint copies of $K_t$ show that the conjecture is best possible. Furthermore, this extremal construction does not contain any tree on $t+1$ vertices. On the other hand, $S_{n,k}$ and $S_{n,k}^+$ contain many trees on $2k+2$ and $2k+3$ vertices (and more). Therefore, it would be interesting to study $\mathrm{spex}(n, T)$ for various specific trees $T$, and we leave this as an open problem.
\bibliographystyle{plain}
	\bibliography{bib.bib}
\end{document}